\documentclass[10pt,leqno]{amsart}
\usepackage{graphicx}
\usepackage{indentfirst,csquotes}

\usepackage{amssymb,amsthm,amsmath,mathtools}
\usepackage{xcolor,paralist,hyperref,fancyhdr,etoolbox}
\allowdisplaybreaks 
\usepackage[english]{babel}
\newtheorem{theorem}{Theorem}[section]

\newtheorem{lemma}[theorem]{Lemma}
\newtheorem{proposition}[theorem]{Proposition}
\newtheorem{corollary}[theorem]{Corollary}

\theoremstyle{remark}

\numberwithin{equation}{section}

\hypersetup{ colorlinks=true, linkcolor=blue, filecolor=blue, urlcolor=blue }

\usepackage[doi,citestyle=ieee,backend=biber,sorting=nty,maxbibnames=10]{biblatex}
\bibliography{bibliographyFockSpaces.bib}
\AtEveryBibitem{
 \clearfield{date}
 \clearfield{eprint}
 \clearfield{isbn}
 \clearfield{issn}
 \clearlist{location}
 \clearfield{month}
 \clearfield{doi}
\clearfield{url}
\clearfield{zbmath}
\clearfield{zbl}
 \ifentrytype{book}{}{
  \clearlist{publisher}
  \clearname{editor}
  \clearfield{booktitle}
 }
}
\usepackage{mdframed}

\newcommand{\ph}{\mathcal{PH}_\alpha^p(\mathbb{C}^n)}
\newcommand{\hph}{\mathcal{PH}_\alpha^2(\mathbb{C}^n)}
\newcommand{\Kph}{K_{ph,\alpha}}
\newcommand{\kph}{k_{ph,\alpha}}
\newcommand{\wto}{\stackrel{w}{\to}}
\DeclareMathOperator{\tr}{tr}
\begin{document}
\title[Toeplitz operators on pluriharmonic Fock spaces]{Toeplitz operators on pluriharmonic Fock spaces} 
\author{Vladan Jaguzović}
\address{ Faculty of Natural Sciences and Mathematics, University of Banja Luka,
	Mladena Stojanovića 2,
	78000 Banja Luka,
	Republic of Srpska,
	Bosnia and Herzegovina
}
\email{vladan.jaguzovic@pmf.unibl.org}
\author{Djordjije Vujadinovi\'c}
\address{Faculty of Natural Sciences and Mathematics, University of
	Montenegro, Cetinjski put b.b. 81000 Podgorica, Montenegro}
\email{djordjijevuj@ucg.ac.me}

\begin{NoHyper} 
	\let\thefootnote\relax
	\footnotetext{MSC2020: Primary: 47B35; Secondary: 47B10, 46E22, 31C10} 
	\footnotetext{Keywords and phrases: Pluriharmonic Fock space; Carleson measure; Toeplitz operator}
\end{NoHyper}
\begin{abstract}
	In this paper, we study Toeplitz operators with a positive symbol on pluriharmonic Fock spaces over $\mathbb{C}^{n}.$ We characterize the conditions under which the Toeplitz operator $T_\mu$ is  bounded, compact, or belongs to the Schatten class $S_1$. Furthermore, we give a necessary condition for a Toeplitz operator to belong to $S_p$ for $p \geq 1,$ and under an additional assumption, we prove the sufficiency of the condition.
\end{abstract} 
\maketitle \pagestyle{myheadings}
\section{Introduction}
By $L_\alpha^p$ we denote the space of all Lebesgue measurable functions $f$ on $\mathbb{C}^{n}$ for which
\[
	\left\lVert f \right\rVert _{p,\alpha}^{p}=\frac{p^{n}}{\left( 2\pi \alpha \right) ^{n}}\int_{\mathbb{C}^{n}}^{}\left| f(z)e^{-\frac{|z|^2}{2\alpha}	} \right|^{p} \, dA(z) < \infty.
\]

The holomorpic Fock space $\mathcal{F}_\alpha^{p}$ is the closed subset of $L_\alpha^p$ consisting of all holomorphic functions belonging to $L_\alpha^p.$ These spaces play a central role in quantum physics, with their applications discussed in \cite{perelomovGeneralizedCoherentStates1977}. It is known that the reproducing kernel of $\mathcal{F}_\alpha^{2}$ is given by $K_\alpha(z,w)=e^{\frac{z \bar{w}}{\alpha}}$ (see \cite{KeheZhuAnalysisOnFockSpaces}). Moreover, the projection $P_\alpha : L_\alpha^p \to \mathcal{F}_\alpha^{p},$ given by
\[
	P_\alpha f(z)=\frac{1}{\pi \alpha}\int_{\mathbb{C}^n}^{}f(w) e^{\frac{z \bar{w}}{\alpha}}e^{-\frac{|z|^2}{\alpha}}\, dA(z)
\]
is bounded for $p \geq 1$ (see \cite{KeheZhuAnalysisOnFockSpaces}). Similarly, the boundedness of the orthogonal projection from $L_\alpha^{p}$ to the harmonic Fock space was established in \cite{vujadinovicBoundednessoftheorthogonal2022}.

The main focus of this paper is the study of Toeplitz operators acting on $\hph.$ We will provide necessary and sufficient conditions for such operators to be bounded, compact, and belong to the trace class. Those problems for holomorphic Fock space $\mathcal{F}_\alpha^2$ are considered in \cite{isralowitzToeplitzOperatorsFock2010}.  To prove our boundedness and compactness results, we will derive a characterization of Carleson measures analogous to the holomorphic case presented in \cite{KeheZhuAnalysisOnFockSpaces} and the harmonic case on $\mathbb{C}$ developed in \cite{DjordjijeCarlesonMeasures2021}.

It remains an open problem to find sufficient conditions for the operator $T_\mu$ to be in the Schatten class $S_p.$
We conjecture that for a positive Borel measure $\mu,$ the operator $T_\mu \in S_p$ if and only if $\mu(B(\cdot,r)) \in L^p(dA)$ for every $r>0.$ While we have yet to provide a complete proof of this statement, we establish the necessity of the condition $\mu(B(\cdot,r)) \in L^p(dA).$ Furthermore, we prove the sufficiency of this condition under the additional assumption that $\mu$ is a positive regular radial Borel measure.

\section{Preliminares}
A pluriharmonic function \(f: \mathbb{C}^{n}\to \mathbb{C}\) is a function such that
\[
	\frac{\partial ^2 f}{\partial z_j \partial \overline{z}_k}= 0,
\]
for all \(j,k = \{1,\dots,n\}.\) It is known that for each such function \(f,\) there exist unique holomorphic functions $g,h$  on $\mathbb{C}^{n}$  such that $h(0)=0$  and $f=g+\overline{h}.$ We denote by $\ph$ the set of all pluriharmonic functions belonging to $L^p_\alpha.$ It is easy to see that $\ph$ is a closed subspace of $L^p_\alpha.$

\subsection{Reproducing kernel}

In the paper \cite{FulscheRobertToeplitzOperatorsonPluriharmonicFunctionSpaces} it is stated that the reproducing kernel of $\hph$ is given by
\[
	\Kph(z,w)=K_{\alpha}(z,w)+K_{\alpha}(w,z)-1,
\]
where $K_{\alpha}(z,w)=e^{ \frac{z \bar{w}}{\alpha}}$ is the kernel for holomorphic Fock space $\mathcal{F}^2_\alpha.$
We also define the normalized pluriharmonic reproducing kernel $\kph(z,w)$ by
\[
	k_{ph,\alpha}(z,w)=\frac{\Kph(z,w)}{\left\lVert \Kph(\cdot,w) \right\rVert _{2,\alpha}} = \frac{\Kph(z,w)}{\sqrt{\Kph(w,w)}}= \frac{\Kph (z,w)}{\sqrt{2e^{\frac{|w|^2}{\alpha}}-1}}.
\]
Furthermore, it is easy to show that $\Kph(z,w)=\sum_{n=1}^{\infty} e_n(z)\overline{e_n(w)},$ for any  orthonormal basis $\{e_n\}$ in $\hph,$ following the same argument as in \cite[Theorem 4.19]{zhuOperatorTheoryFunction2007}.

The set $\mathrm{Lin}\, \left\{ \Kph(z,w) \mid w \in \mathbb{C}^{n} \right\} $ is a dense subset of $\hph.$ Indeed, if $f$ is orthogonal to the set, we have
\[
	f(z)=\frac{1}{\left( \alpha \pi \right) ^{n}}\int_{\mathbb{C}^{n}}^{}f(w) \Kph(z,w) e^{-\frac{|w|^2}{\alpha}}\, dA(w)= \langle f,\Kph(\cdot,z) \rangle = 0.
\]
Moreover, the set $\mathrm{Lin} \ \{\kph (z,w) \mid w \in \mathbb{C}^{n}\}$ is dense in $\hph.$ Therefore, $f_m \wto 0,$ if and only if $(f_m)$ is bounded and  $\langle f_m,\kph(\cdot,w) \rangle \to 0,$ for every $w \in \mathbb{C}^{n},$ by the Banach-Steinhaus theorem.

\subsection{Some inequalities for pluriharmonic functions}
In this subsection we will give some useful estimates for the integral average of holomorphic and harmonic functions. The first lemma, we state is a standard lemma found in almost all textbooks and papers on Fock spaces. In the second lemma, we deal with an appropriate estimate for harmonic functions.
\begin{lemma}\label{lm:point-evaluation-on-arbitrary-balls}
	Let $0<p<\infty,\alpha>0$ and $R>0$ are positive fixed paramethers. There exists a positive constant $C(p,\alpha,R)$ such that
	\[
		\left| g(a)e^{-\frac{|a|^2}{2\alpha}} \right| ^{p}\leq \frac{C}{r^{2n}}\int_{B(a,r)}^{}\left| g(z) \right| ^{p}e^{-\frac{|z|^2p}{2\alpha}}\, dA(z)
	\]
	for every $0<r<R, a \in \mathbb{C}^{n}$ and every entire function $g$ defined on $\mathbb{C}^{n}.$
\end{lemma}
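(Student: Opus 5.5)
We reduce to the subharmonicity of $|h|^p$ for an entire function $h$ by translating $g$ to the origin with the Weyl-type factor that absorbs the Gaussian weight. Fix $a\in\mathbb{C}^n$ and set
\[
	h(z)=g(z+a)\,e^{-\frac{z\bar a}{\alpha}}\,e^{-\frac{|a|^2}{2\alpha}},
\]
which is entire in $z$. Expanding $|z+a|^2=|z|^2+2\operatorname{Re}(z\bar a)+|a|^2$ gives the identity
\[
	\left| h(z) \right| = \left| g(z+a) \right| e^{-\frac{|z+a|^2}{2\alpha}}\, e^{\frac{|z|^2}{2\alpha}} .
\]
In particular $|h(0)|=|g(a)|e^{-|a|^2/(2\alpha)}$, which is the left-hand side we need to bound.

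Since $h$ is holomorphic on $\mathbb{C}^n$, the function $|h|^p$ is plurisubharmonic, hence subharmonic on $\mathbb{C}^n\cong\mathbb{R}^{2n}$, for every $0<p<\infty$. This holds because $\log|h|$ is plurisubharmonic and $|h|^p=e^{p\log|h|}$. The sub-mean value property over the Euclidean ball $B(0,r)$, whose volume is $c_n r^{2n}$, then gives
\[
	|h(0)|^p\le \frac{1}{c_n r^{2n}}\int_{B(0,r)}|h(z)|^p\,dA(z).
\]

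Next we return to the original variables. On $B(0,r)$ we have $e^{p|z|^2/(2\alpha)}\le e^{pR^2/(2\alpha)}$, since $r<R$. Therefore
\[
	|h(z)|^p\le e^{\frac{pR^2}{2\alpha}}\,|g(z+a)|^p e^{-\frac{p|z+a|^2}{2\alpha}}.
\]
Substituting $w=z+a$ maps $B(0,r)$ onto $B(a,r)$ and yields the claimed inequality with
\[
	C=\frac{e^{pR^2/(2\alpha)}}{c_n},
\]
which depends only on $p$, $\alpha$, $R$ and $n$, uniformly in $0<r<R$ and $a$.

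The only point needing care is the subharmonicity of $|h|^p$ for $0<p<1$, where convexity arguments do not apply. We handle it through the plurisubharmonicity of $\log|h|$ as above, or equivalently by applying the one-variable fact on complex lines and integrating over spheres to get the ball average. Everything else is the algebraic identity for the modulus of $h$, which is a direct check.
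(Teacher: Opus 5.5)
Your proof is correct and is essentially the argument the paper invokes by citing Zhu's Lemma 2.32: the sub-mean-value inequality for the subharmonic function $|h|^p$ on $B(0,r)$ is transported to an arbitrary center $a$ by the Weyl-type translation $h(z)=g(z+a)e^{-z\bar a/\alpha-|a|^2/(2\alpha)}$, which absorbs the Gaussian weight exactly. The argument gives your constant $\frac{n!}{\pi^n}e^{pR^2/(2\alpha)}$, whereas the paper's stated $C=2n\,e^{Rp/(2\alpha)}$ does not follow from it and appears to be a misprint, so you need not reconcile the two.
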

\begin{proof}
	It follows by applying the same idea from \cite[Lemma 2.32]{KeheZhuAnalysisOnFockSpaces} for $C=2n e^{\frac{Rp}{2\alpha}}.$
\end{proof}
\begin{lemma}\label{lm:local-point-evaluation-estimate-for-harmonic-functions}
	Let $f$ be a harmonic function and $\alpha>0,p>0,$ and $R$ are positive fixed parametheres. Then, there exists a positive constant $C(|a|,p,\alpha,R)$ such that
	\[
		\left| f(a) e^{-\frac{|a|^2}{2\alpha}} \right| ^{p}\leq \frac{C}{r^{2n}}\int_{B(a,r)}^{}\left| f(z)e^{-\frac{|z|^2}{2\alpha}} \right| ^{p}\, dA(z)
	\]
	holds for all $0<r\leq R.$
	The constant $C$ is given by
	\[
		C=C_p e^{p\frac{R^2+2R|a|}{2\alpha}}
	\]
	where $C_p$ depends only on $p.$
\end{lemma}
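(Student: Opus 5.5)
The plan is to use the sub-mean value property for $|f|^p$ and then compare the two Gaussian weights on the ball $B(a,r)$. I take $f$ harmonic on $\mathbb{C}^n\cong\mathbb{R}^{2n}$ (in the application $f$ is pluriharmonic, hence harmonic).

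\emph{Step 1: mean value inequality.} For a harmonic $f$ and any $p>0$ there is a constant $C_p$, depending only on $p$ and the dimension, such that
\[
|f(a)|^p\le \frac{C_p}{r^{2n}}\int_{B(a,r)}|f(z)|^p\,dA(z).
\]
For $p\ge 1$ this follows from the mean value property together with the subharmonicity of $|f|^p$; the constant is $1/\mathrm{vol}(B(0,1))$. For $0<p<1$, $|f|^p$ need not be subharmonic. There I would invoke the classical Fefferman--Stein / Hardy--Littlewood lemma, which states that harmonic functions satisfy the local $L^p$ sub-mean value inequality for every $p>0$, with a constant depending only on $p$ and $n$. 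Its proof is the usual iteration over shrinking concentric balls, combining the $p=1$ estimate with the bound $|f|\le \|f\|_\infty^{1-p}|f|^p$. This is the only step that is not a direct computation, so it is the main obstacle; I would cite it rather than reprove it. Since only an unspecified $C_p$ is claimed, no sharp constant is needed.

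\emph{Step 2: weight comparison.} For $z\in B(a,r)$ with $r\le R$,
\[
|z|^2\le (|a|+r)^2\le |a|^2+2R|a|+R^2 .
\]
Hence
\[
e^{-\frac{p|a|^2}{2\alpha}}\le e^{p\frac{R^2+2R|a|}{2\alpha}}\,e^{-\frac{p|z|^2}{2\alpha}}.
\]
I multiply the inequality of Step 1 by $e^{-p|a|^2/(2\alpha)}$ and move this factor inside the integral using the bound above. This gives
\[
\left|f(a)e^{-\frac{|a|^2}{2\alpha}}\right|^p\le \frac{C_p e^{p\frac{R^2+2R|a|}{2\alpha}}}{r^{2n}}\int_{B(a,r)}\left|f(z)e^{-\frac{|z|^2}{2\alpha}}\right|^p dA(z),
\]
which is exactly the claimed constant.

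Unlike Lemma~\ref{lm:point-evaluation-on-arbitrary-balls}, the constant here depends on $|a|$. The reason is that we cannot absorb the factor $e^{\langle z,a\rangle/\alpha}$ into $f$: multiplying by it does not preserve harmonicity, whereas in the holomorphic case it does. That explains the form of the statement, and the argument above needs no more than the two steps.
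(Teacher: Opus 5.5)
Your proposal is correct and follows the paper's proof essentially verbatim: the Fefferman--Stein local sub-mean value inequality for $|f|^p$, combined with the bound $|z|^2\le(|a|+R)^2$ on $B(a,r)$ to compare the Gaussian weights, gives exactly the stated constant. Your weight comparison is stated more carefully than the paper's, which writes an equality where only the inequality $e^{-p(|a|+r)^2/(2\alpha)}\ge e^{-p|a|^2/(2\alpha)}e^{-p(R^2+2R|a|)/(2\alpha)}$ holds.
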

\begin{proof}
	Using the inequality
	\begin{align*}
		\int_{B(a,r)}^{}\left| f(z)e^{-\frac{|z|^2}{2\alpha}} \right| ^{p}\, dA(z) & \geq e^{-p\frac{(|a|+r)^2}{2\alpha}} \int_{B(a,r)}^{}|f(z)|^p\, dA(z)
	\end{align*}
	and by Lemma \cite[Lemma 2, p. 172]{feffermanHpSpacesSeveral1972}, we obtain
	\[
		\int_{B(a,r)}^{}\left| f(z)e^{-\frac{|z|^2}{2\alpha}} \right| ^{p}\, dA(z) \geq e^{-p\frac{(|a|+r)^2}{2\alpha}} |f(a)|^p \frac{r^{2n}}{C_p} = \frac{r^{2n}}{C_p} e^{-p\frac{R^2+2R|a|}{2\alpha}} \left| f(a) e^{-\frac{|a|^2}{2\alpha}} \right| ^{p}. \qedhere
	\]
\end{proof}
\subsection{The Berezin transform of operators}
Here we introduce the Berezin transform with respect to the pluriharmonic kernel. Moreover, we establish several basic properties of this transform, for latter use.

For every bounded linear operator $T : \hph \to \hph,$ we define the Berezin transform (the Berezin symbol) of $T$ as
\[
	\widetilde{T}(z)=\langle T \kph (\cdot,z),\kph(\cdot,z) \rangle.
\]
\begin{proposition}
	If $T$ is a compact operator, then $\widetilde{T}(z) \to 0,$ as $|z| \to \infty.$
\end{proposition}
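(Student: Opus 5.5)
The plan is the standard one: show that the normalized kernels $\kph(\cdot,z)$ converge weakly to $0$ in $\hph$ as $|z|\to\infty$. Compactness then turns this into norm convergence of $T\kph(\cdot,z)$, and we finish with Cauchy--Schwarz,
\[
	\bigl|\widetilde{T}(z)\bigr| \leq \left\lVert T\kph(\cdot,z) \right\rVert _{2,\alpha}\left\lVert \kph(\cdot,z) \right\rVert _{2,\alpha} = \left\lVert T\kph(\cdot,z) \right\rVert _{2,\alpha}\to 0 .
\]
It suffices to do this along an arbitrary sequence $z_m$ with $|z_m|\to\infty$.

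\textbf{Step 1: weak convergence of the normalized kernels.} I use the criterion from the preliminaries: $f_m \wto 0$ if and only if $(f_m)$ is bounded and $\langle f_m,\kph(\cdot,w)\rangle\to 0$ for every $w$. Boundedness is immediate, since $\left\lVert \kph(\cdot,z_m) \right\rVert _{2,\alpha}=1$. For fixed $w$, the reproducing property gives
\[
	\langle \kph(\cdot,z_m),\kph(\cdot,w)\rangle = \frac{\Kph(w,z_m)}{\sqrt{\Kph(z_m,z_m)}\sqrt{\Kph(w,w)}} .
\]
So it is enough to show $\Kph(w,z_m)/\sqrt{2e^{|z_m|^2/\alpha}-1}\to 0$.

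\textbf{Step 2: the kernel estimate.} This is the only genuine step, and I expect it to be the main (mild) obstacle because $\Kph$ has three terms. Write $\Kph(w,z)=e^{w\bar z/\alpha}+e^{z\bar w/\alpha}-1$. Both exponentials have modulus $e^{\mathrm{Re}(w\bar z)/\alpha}\le e^{|w||z|/\alpha}$. Hence
\[
	|\Kph(w,z)| \le 2e^{|w||z|/\alpha}+1 .
\]
The denominator is at least $e^{|z|^2/(2\alpha)}$ once $|z|$ is large, since $2e^{t}-1\ge e^{t}$ for $t\ge 0$. The ratio is therefore bounded by
\[
	\bigl(2e^{|w||z|/\alpha}+1\bigr)e^{-|z|^2/(2\alpha)},
\]
which tends to $0$ as $|z|\to\infty$ because the quadratic term dominates the linear one. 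This proves $\kph(\cdot,z_m)\wto 0$.

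\textbf{Step 3: conclusion.} A compact operator on a Hilbert space maps weakly null sequences to norm null sequences. Hence $\left\lVert T\kph(\cdot,z_m) \right\rVert _{2,\alpha}\to 0$, and the Cauchy--Schwarz bound above gives $\widetilde{T}(z_m)\to 0$. Since the sequence $z_m$ was arbitrary, $\widetilde{T}(z)\to 0$ as $|z|\to\infty$.
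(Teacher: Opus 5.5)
Your proposal is correct and follows the paper's own argument. The normalized kernels $\kph(\cdot,z)$ tend weakly to $0$ because $\langle \kph(\cdot,z),\kph(\cdot,w)\rangle\to 0$; compactness then gives $\lVert T\kph(\cdot,z)\rVert_{2,\alpha}\to 0$, and $|\widetilde{T}(z)|\leq \lVert T\kph(\cdot,z)\rVert_{2,\alpha}$ finishes. The one addition is your explicit bound $|\Kph(w,z)|\leq 2e^{|w||z|/\alpha}+1$ against $\sqrt{2e^{|z|^2/\alpha}-1}\geq e^{|z|^2/(2\alpha)}$, which the paper leaves to the reader (and you correctly take $|z|\to\infty$ where the paper's text has the typo $|z|\to 0$).
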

\begin{proof}
	Notice
	\[
		\langle \kph(\cdot,z),\kph(\cdot,w) \rangle = \frac{1}{\left( 2e^{\frac{|z|^2}{\alpha}}-1 \right) ^{1/2}\left( 2 e^{\frac{|w|^2}{\alpha}}-1 \right) ^{1/2}} \Kph(z,w) \to 0,
	\]
	as $|z|\to 0,$ therefore $\kph(\cdot,z)$ converges weakly to $0$ as $z \to \infty.$ Since $|\widetilde{T}(z)|\leq \left\lVert T \kph(\cdot,z) \right\rVert $, by \cite[Thm 1.14]{zhuOperatorTheoryFunction2007} we have that $\widetilde{T}(z)\to 0,$ as $|z|\to \infty.$
\end{proof}
\begin{theorem}\label{tm:berezin-transform-trace-formula}
	If $T$ is a positive operator or $T$ is a trace-class operator then
	\[
		\tr (T)= \frac{1}{\left( \alpha \pi \right) ^{n}}\int_{\mathbb{C}^{n}}^{}\widetilde{T} (z) \left( 2e^{\frac{|z|^2}{\alpha}}-1 \right) e^{-\frac{|z|^2}{\alpha}}\, dA(z).
	\]
	Also, a positive operator $T$ belongs to the trace-class if and only if the integral converges.
\end{theorem}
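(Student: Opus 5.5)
Fix an orthonormal basis $\{e_m\}$ of $\hph$ and use the expansion $\Kph(z,w)=\sum_m e_m(z)\overline{e_m(w)}$ from the preliminaries. The trace will be computed via the basis, the order of summation and integration will be interchanged, and the sum will be recognized as the Berezin transform times the kernel on the diagonal.

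First let $T$ be positive and put $S=T^{1/2}$. Then
\[
\widetilde{T}(z)\,\Kph(z,z)=\langle T\Kph(\cdot,z),\Kph(\cdot,z)\rangle=\|S\Kph(\cdot,z)\|^2 .
\]
Expanding $S\Kph(\cdot,z)$ in the basis gives
\[
\langle S\Kph(\cdot,z),e_m\rangle=\langle \Kph(\cdot,z),Se_m\rangle=\overline{(Se_m)(z)}
\]
by the reproducing property. Parseval then yields $\widetilde{T}(z)\Kph(z,z)=\sum_m |(Se_m)(z)|^2$. All terms are nonnegative, so Tonelli permits exchanging sum and integral:
\[
\frac{1}{(\alpha\pi)^n}\int_{\mathbb{C}^n}\widetilde{T}(z)\Kph(z,z)e^{-\frac{|z|^2}{\alpha}}\,dA(z)=\sum_m\|Se_m\|_{2,\alpha}^2=\sum_m\langle Te_m,e_m\rangle=\tr(T).
\]
Since $\Kph(z,z)=2e^{|z|^2/\alpha}-1$, this is exactly the stated formula, valid in $[0,\infty]$. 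Moreover, a positive $T$ is trace class precisely when $\sum_m\langle Te_m,e_m\rangle<\infty$, which is equivalent to convergence of the integral. This settles the positive case and the final claim.

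Now let $T$ be trace class. Write $T$ as a linear combination of four positive trace-class operators: split $T=\operatorname{Re}T+i\operatorname{Im}T$ into self-adjoint trace-class parts, and split each part into its positive and negative parts, which are again trace class. Both sides of the formula are linear in $T$. Each positive piece gives a finite, absolutely convergent integral by the first step, so the integral for $T$ converges absolutely and the formula follows by linearity.

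The main obstacle is justifying the interchange of sum and integral. In the positive case this is clean because of Tonelli. The only subtlety is that $\sum_m|(Se_m)(z)|^2$ equals $\|S\Kph(\cdot,z)\|^2$, which needs $\Kph(\cdot,z)\in\hph$ and the reproducing identity. For a non-positive trace-class $T$, a direct Fubini argument would be delicate, and this is why I reduce to positive operators rather than expanding $\langle T\Kph(\cdot,z),\Kph(\cdot,z)\rangle$ directly.
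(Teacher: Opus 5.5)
Your proof is correct and follows essentially the route the paper takes: the paper defers to Zhu's Prop.~3.3, which likewise writes $T=S^2$ and uses the reproducing property and Parseval to get $\widetilde{T}(z)\Kph(z,z)=\sum_m|(Se_m)(z)|^2$. It then interchanges sum and integral by Tonelli and handles a general trace-class $T$ by splitting it into four positive trace-class pieces, with your normalization $\frac{1}{(\alpha\pi)^n}$ and $\Kph(z,z)=2e^{|z|^2/\alpha}-1$ matching the paper's conventions exactly.
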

\begin{proof}
	Analogous to the proof in \cite[Prop. 3.3]{KeheZhuAnalysisOnFockSpaces}.
\end{proof}
\begin{proposition} \label{prop:schatten-class-berezin-transform}
	Let $p \geq 1$ and $T \in S_p.$ Then $\widetilde{T}$ belongs to $L^p(\mathbb{C}^{n},( 2e^{\frac{|z|^2}{\alpha}}-1 ) e^{-\frac{|z|^2}{\alpha}}dA(z)).$
\end{proposition}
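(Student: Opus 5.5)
Write $d\lambda(z)=\frac{1}{(\alpha\pi)^n}\bigl(2e^{\frac{|z|^2}{\alpha}}-1\bigr)e^{-\frac{|z|^2}{\alpha}}\,dA(z)$. This is the measure appearing in Theorem \ref{tm:berezin-transform-trace-formula}; we have $\Kph(z,z)=2e^{|z|^2/\alpha}-1$, so $d\lambda(z)=\Kph(z,z)\,dv_\alpha(z)$, where $dv_\alpha$ is the normalized Gaussian measure. The plan is the standard one for $S_p$ membership of Berezin transforms: first handle positive operators using the trace formula together with a convexity (Jensen/H\"older) inequality, and then pass to general $T\in S_p$ via a polar/spectral decomposition.

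\emph{Step 1 (positive $T\in S_p$).} Let $T\ge 0$ with $T\in S_p$, so $T^p$ is positive and trace class. For a unit vector $x$ we have $\langle Tx,x\rangle^p\le\langle T^px,x\rangle$ when $p\ge1$. To see this, write $T=\sum\lambda_j\langle\cdot,e_j\rangle e_j$. Then $\langle Tx,x\rangle=\sum\lambda_j|\langle x,e_j\rangle|^2$, and $\sum|\langle x,e_j\rangle|^2\le 1$, so Jensen's inequality for the convex function $t\mapsto t^p$ (with the missing mass put at $\lambda=0$) gives the claim. Applying this with $x=\kph(\cdot,z)$ yields $\widetilde{T}(z)^p\le\widetilde{T^p}(z)$. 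Integrating against $d\lambda$ and using Theorem \ref{tm:berezin-transform-trace-formula} for the positive operator $T^p$, we get
\[
\int_{\mathbb{C}^n}\widetilde{T}(z)^p\,d\lambda(z)\le\tr(T^p)=\|T\|_{S_p}^p<\infty .
\]

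\emph{Step 2 (general $T\in S_p$).} Take the canonical decomposition $T=\sum_j s_j\langle\cdot,e_j\rangle\sigma_j$ with orthonormal systems $(e_j)$, $(\sigma_j)$. Setting $x=\kph(\cdot,z)$, the Cauchy--Schwarz inequality gives
\[
|\widetilde{T}(z)|=\Bigl|\sum_j s_j\langle x,e_j\rangle\langle\sigma_j,x\rangle\Bigr|\le\Bigl(\sum_j s_j|\langle x,e_j\rangle|^2\Bigr)^{1/2}\Bigl(\sum_j s_j|\langle x,\sigma_j\rangle|^2\Bigr)^{1/2}.
\]
The right-hand side equals $\widetilde{|T|}(z)^{1/2}\,\widetilde{|T^*|}(z)^{1/2}$. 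Since $|T|$ and $|T^*|$ are positive operators in $S_p$, Step 1 places $\widetilde{|T|}$ and $\widetilde{|T^*|}$ in $L^p(d\lambda)$. H\"older's inequality with exponents $2,2$ applied to $|\widetilde{T}|^p\le\widetilde{|T|}^{p/2}\,\widetilde{|T^*|}^{p/2}$ then gives $\|\widetilde{T}\|_{L^p(d\lambda)}^p\le\|T\|_{S_p}^p$. An alternative route is to split $T$ into real and imaginary parts and then into positive and negative parts, each of which lies in $S_p$.

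\emph{Main obstacle.} The delicate point is justifying Theorem \ref{tm:berezin-transform-trace-formula} for $T^p$ with exactly this weight. That is, one needs $\tr(S)=\sum_n\langle Se_n,e_n\rangle=\int\langle S\Kph(\cdot,z),\Kph(\cdot,z)\rangle\,dv_\alpha(z)$, obtained by interchanging sum and integral (Tonelli, using positivity of $S$) and using $\Kph(z,w)=\sum e_n(z)\overline{e_n(w)}$. This is already asserted in the theorem, so we may invoke it directly. The Jensen step also requires care when $\sum|\langle x,e_j\rangle|^2<1$, i.e.\ when $x$ has a component in $\ker T$; this is handled by assigning the missing mass to the eigenvalue $0$, which is harmless because $0^p=0$.
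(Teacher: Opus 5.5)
Your proof is correct, but it takes a different route from the paper's.

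The paper proves only two endpoint estimates. The first is $|\widetilde{T}(z)|\le\|T\|$, i.e.\ $S_\infty\to L^\infty$. The second is $\int|\widetilde{T}|\,d\lambda\le 2\|T\|_{S_1}$, obtained from the trace formula after splitting $T$ into self-adjoint parts and bounding $|\widetilde{T}|\le\widetilde{|T|}$ for self-adjoint $T$. It then gets the $S_p$ statement by complex interpolation of the linear map $T\mapsto\widetilde{T}$.

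You work directly at exponent $p$ instead. For positive $T$ you combine the convexity inequality $\widetilde{T}^p\le\widetilde{T^p}$ with the trace formula for the positive operator $T^p$. For general $T$ you use the bound $|\widetilde{T}|\le\widetilde{|T|}^{1/2}\,\widetilde{|T^*|}^{1/2}$ from the Schmidt expansion, followed by H\"older.

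Your route is self-contained: it avoids the identification $[S_1,S_\infty]_\theta=S_p$ and the interpolation theorem. It also gives the clean estimate $\|\widetilde{T}\|_{L^p(d\lambda)}\le\|T\|_{S_p}$ with constant $1$. The paper's argument gives constant $2$ at $p=1$, hence $2^{1/p}$ after interpolation. The paper's route is shorter if interpolation is taken as known, and it needs only endpoint information about the linear map.

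One caution: the remark after the Corollary in the paper states $\widetilde{T^p}\le\widetilde{T}^p$, which is the $0<p\le1$ case of Zhu's Prop.~1.31. For $p\ge1$ the correct direction is the one you derive via Jensen, so rely on your own derivation rather than that remark.
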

\begin{proof}
	Notice that if $T$ is a bounded operator, that is, $T \in S_{\infty},$ we have $|\widetilde{\mu}(z)| \leq \left\lVert T \right\rVert. $ Also, if $T \in S_1$ is positive, we have that
	\[
		\frac{1}{\left( \alpha \pi \right) ^{n}}\int_{\mathbb{C}^{n}}^{}\widetilde{T}(z) \left( 2e^{\frac{|z|}{\alpha}}-1 \right) e^{-\frac{|z|}{\alpha}}\, dA(z) = \tr(T)=\left\lVert T \right\rVert _{S_{1}}.
	\]
	If $T \in S_1 $ is a self-adjoint then $T=T_1 -T_2,$ where $T_1 =\frac{|T|+T}{2},$ and $T_2 = \frac{|T|-T}{2}$ are positive operators. Since $|\widetilde{T}(z)|\leq \widetilde{T_1 }(z)+\widetilde{T_2 }(z),$ we have
	\begin{align*}
		\frac{1}{\left( \alpha \pi \right) ^{n}} \int_{\mathbb{C}^{n}}^{}\left| \widetilde{T}(z) \right| \frac{2e^{\frac{|z|^2}{\alpha}}-1 }{ e^{\frac{|z|^2}{\alpha}}} \, dA(z) \leq \frac{1}{\left( \alpha \pi \right) ^{n}}\int_{\mathbb{C}^{n}}^{} \widetilde{|T|}(z)\frac{ 2e^{\frac{|z|^2}{\alpha}}-1 }{ e^{\frac{|z|^2}{\alpha}}} \, dA(z) = \tr(|T|)=\left\lVert T \right\rVert _{S_1}.
	\end{align*}
	If $T \in S_1$ is an arbitary operator, then $T=T_1 + i T_2 = \frac{T+T^*}{2}+i \frac{T-T^*}{2i},$ where $T_1$ and $T_2 $ are self-adjoint operators. Since
	\(\left| \widetilde{T}(z) \right| \leq \left| \widetilde{T_1 } \right| + \left| \widetilde{T_2 } \right|,\) by the previous discussion, we have
	\[
		\frac{1}{\left( \alpha \pi \right) ^{n}} \int_{\mathbb{C}^{n}}^{}\left| \widetilde{T}(z) \right| \frac{2e^{\frac{|z|^2}{\alpha}}-1 }{ e^{\frac{|z|^2}{\alpha}}} \, dA(z) \leq\left\lVert T_1  \right\rVert +\left\lVert  T_2  \right\rVert \leq  2  \left\lVert  T \right\rVert.
	\]
	From the complex interpolation we have if $T \in S_p$ then $T \in L^p(\mathbb{C}^{n},\frac{2e^{\frac{|z|^2}{\alpha}}-1}{e^{\frac{|z|^2}{\alpha}}}dA(z)).$
\end{proof}
\begin{corollary}
	If $p\geq 1,$ and $T \in S_p.$ Then $\widetilde{T} \in L^p(\mathbb{C}^{n},dA(z)).$
\end{corollary}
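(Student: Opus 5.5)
The corollary follows from Proposition \ref{prop:schatten-class-berezin-transform} by comparing the weighted measure there with Lebesgue measure. The weight is
\[
	\omega(z)=\left( 2e^{\frac{|z|^2}{\alpha}}-1 \right) e^{-\frac{|z|^2}{\alpha}} = 2-e^{-\frac{|z|^2}{\alpha}}.
\]
Since $0<e^{-\frac{|z|^2}{\alpha}}\leq 1$ for every $z\in\mathbb{C}^n$, we get the two-sided bound $1\leq \omega(z)< 2$. Thus the measure $\omega\, dA$ is equivalent to $dA$ with bounded Radon--Nikodym derivatives in both directions. Consequently
\[
	L^p(\mathbb{C}^{n},\omega\, dA)=L^p(\mathbb{C}^{n},dA)
\]
as sets, with equivalent norms: for every measurable $f$,
\[
	\int_{\mathbb{C}^{n}}|f|^p\, dA \leq \int_{\mathbb{C}^{n}}|f|^p\,\omega\, dA\leq 2\int_{\mathbb{C}^{n}}|f|^p\, dA .
\]

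The plan is therefore as follows. Let $T\in S_p$ with $p\geq 1$. Proposition \ref{prop:schatten-class-berezin-transform} gives $\widetilde{T}\in L^p(\mathbb{C}^n,\omega\,dA)$. The left inequality above, applied with $f=\widetilde{T}$, then yields $\int_{\mathbb{C}^n}|\widetilde{T}|^p\,dA<\infty$. In the case $p=\infty$, if it is intended, the bound $|\widetilde{T}(z)|\leq\left\lVert T \right\rVert$ already gives the claim directly.

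There is no genuine obstacle here; the corollary only requires the lower bound $\omega\geq 1$. The only point needing care is the identity $\omega(z)=2-e^{-|z|^2/\alpha}$ itself. In the proof of the proposition the exponent is written as $|z|$ rather than $|z|^2$ in one line, but the bound $1\leq\omega<2$ holds either way.
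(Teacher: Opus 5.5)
Your proof is correct and matches the paper's approach. The paper gives no separate proof and treats the corollary as an immediate consequence of Proposition~\ref{prop:schatten-class-berezin-transform}, using the two-sided bound $1\leq \bigl(2e^{\frac{|z|^2}{\alpha}}-1\bigr)e^{-\frac{|z|^2}{\alpha}}\leq 2$ on the weight (the same bound it uses explicitly in the trace-class proposition), and, as you note, only the lower bound is needed.
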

For a positive operator on a Hilbert space, by \cite[Prop. 1.31]{zhuOperatorTheoryFunction2007}, we have $\langle T^{p}x,x \rangle\leq \langle Tx,x \rangle^{p},$ for every unit vector $x.$ This implies that for a positive operator we have $ \widetilde{T^p}\leq \widetilde{T}^p.$
\subsection{\texorpdfstring{$p$}{p}-Carleson measures}
For a non-negative Borel measure $\mu$ on $\mathbb{C}^{n}$  and $1\leq p<\infty,$ we say that $\mu$  is $p$-Carleson measure for the pluriharmonic Fock space $\mathcal{PH}_\alpha^{p}(\mathbb{C}^{n})$ if there exists a constant $C>0$  such that
\[
	\left( \int_{\mathbb{C}^{n}}^{}\left| f(z) e^{-\frac{|z|^2}{2\alpha}} \right| ^{p}\, d\mu(z)  \right) ^{1/p}\leq C \left\lVert f \right\rVert _{p,\alpha}.
\]
Here we give a characterization of the $p$-Carleson measures. These results will yield a characterisation of boundedness of Toeplitz operators on $\hph.$ For the mentioned characterisation, we will use only the case $p=2.$ However, since the same approach applies to all $p\geq 1,$ we prove the result for the general case.  Before stating the theorem, we briefly review the tools required for the proof.

Let us denote with $P_{p,\alpha} :  L_\alpha^p \to \mathcal{F}^{p}_\alpha$ projection onto the holomorphic Fock space $\mathcal{F}^p_\alpha.$ One can easy show that $P_{p,\alpha}$ is bounded, using the same idea as in  \cite[Thm 2.20]{KeheZhuAnalysisOnFockSpaces}. Furthermore, the same proof give us that $\left\lVert P_{\alpha,p}\right\rVert \leq 2^n.$

\begin{theorem}\label{tm:characterization-of-Carleson-measures}
	Let $\mu$ be a positive Borel measure on $\mathbb{C}^{n},$ and $p\geq 1,\alpha >0$ and $0<r<\infty$  fixed. Then the following conditions are equivalent:
	\begin{enumerate}
		\item $\mu$  is a $p$-Carleson measure;
		\item There exists a constant $C>0$  such that $\mu(B(a,r))\leq C$ for every $a \in \mathbb{C}^{n}.$
	\end{enumerate}
\end{theorem}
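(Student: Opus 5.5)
For (1)$\Rightarrow$(2) I will test the Carleson inequality on holomorphic functions, which are pluriharmonic. Fix $a\in\mathbb{C}^n$ and put $f_a(z)=e^{\frac{p\, z\bar a}{2\alpha}\cdot\frac{2}{p}}=K_\alpha(z,a)$, suitably normalized. A direct computation gives
\[
\left|K_\alpha(z,a)e^{-\frac{|z|^2}{2\alpha}}\right|=e^{\frac{|a|^2}{2\alpha}}e^{-\frac{|z-a|^2}{2\alpha}}, \qquad \|K_\alpha(\cdot,a)\|_{p,\alpha}=e^{\frac{|a|^2}{2\alpha}}.
\]
The norm identity follows from the Gaussian integral, since the normalization $p^n/(2\pi\alpha)^n$ in $\|\cdot\|_{p,\alpha}$ is chosen precisely to make it hold. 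Applying (1) to $f_a/\|f_a\|_{p,\alpha}$ then gives
\[
e^{-\frac{pr^2}{2\alpha}}\mu(B(a,r))\le\int_{B(a,r)}e^{-\frac{p|z-a|^2}{2\alpha}}\,d\mu(z)\le C^p,
\]
uniformly in $a$. This is (2).

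For (2)$\Rightarrow$(1), take $f\in\ph$ and write $f=g+\bar h$ with $g,h$ entire and $h(0)=0$. The key point is to control $g$ and $h$ separately in $\mathcal F^p_\alpha$. Since $\bar h$ is antiholomorphic, it is orthogonal to holomorphic monomials except constants, and $h(0)=0$. Hence $P_{p,\alpha}f=g+\overline{h(0)}=g$. Boundedness of $P_{p,\alpha}$ with $\|P_{p,\alpha}\|\le 2^n$, stated just before the theorem, then yields
\[
\|g\|_{p,\alpha}\le 2^n\|f\|_{p,\alpha}, \qquad \|h\|_{p,\alpha}=\|\bar h\|_{p,\alpha}\le(1+2^n)\|f\|_{p,\alpha}.
\]
I expect this step to be the main subtlety. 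One has to justify $P_{p,\alpha}\bar h=\overline{h(0)}$ for $\bar h\in L^p_\alpha$ when $p\neq2$. I would do this by writing the kernel integral in polar coordinates, or by a dilation/density argument. It also needs the fact that $h\in L^p_\alpha$ forces $\bar h\in L^p_\alpha$, which is trivial here because the two have the same modulus.

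Having reduced to the holomorphic case, it suffices to prove $\int|ue^{-|z|^2/2\alpha}|^pd\mu\le C'\|u\|^p_{p,\alpha}$ for entire $u$, and then use $|f|^p\le 2^{p-1}(|g|^p+|h|^p)$. For entire $u$ I apply Lemma \ref{lm:point-evaluation-on-arbitrary-balls} with radius $r$; if the given $r$ is large, use any fixed smaller radius, since (2) for one $r$ implies it for every smaller radius. This gives
\[
|u(z)e^{-\frac{|z|^2}{2\alpha}}|^p\le\frac{C}{r^{2n}}\int_{B(z,r)}|u(w)e^{-\frac{|w|^2}{2\alpha}}|^p\,dA(w).
\]
Integrating in $d\mu(z)$ and applying Fubini, the indicator $\chi_{B(z,r)}(w)=\chi_{B(w,r)}(z)$ produces the factor $\mu(B(w,r))\le C$. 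Therefore
\[
\int|ue^{-\frac{|z|^2}{2\alpha}}|^p\,d\mu\le\frac{C^2}{r^{2n}}\cdot\frac{(2\pi\alpha)^n}{p^n}\|u\|_{p,\alpha}^p.
\]
This is the Carleson inequality for entire $u$, and combining the bounds for $g$ and $h$ completes the proof.
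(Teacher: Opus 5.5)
Your proof is correct. The direction (1)$\Rightarrow$(2) and the reduction of $f=g+\bar h$ to its holomorphic parts via $P_{p,\alpha}$ are the same as in the paper. Your bound $\|h\|_{p,\alpha}=\|f-g\|_{p,\alpha}$ is cleaner than the paper's formula for $h$ through $P_{p,\alpha}\bar f$.

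The real difference is the Carleson estimate for entire functions. The paper discretizes. It takes the lattice $\{z_k\}=r\mathbb{Z}^{2n}$, applies Lemma~\ref{lm:point-evaluation-on-arbitrary-balls} on $B(z,r)\subset B(z_k,2r)$ for $z\in B(z_k,r)$, and sums using $\mu(B(z_k,r))\le C$ and the bounded overlap of the balls $B(z_k,2r)$. You instead integrate the sub-mean-value inequality against $d\mu$ and swap the integrals. Tonelli applies because (2) makes $\mu$ $\sigma$-finite. Condition (2) then enters directly as the weight $\mu(B(w,r))$.

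Your route is shorter and needs no covering or overlap constants. It also avoids a flaw in the paper's version: radius-$r$ balls centred at $r\mathbb{Z}^{2n}$ do not cover $\mathbb{C}^n$ once $n\ge 2$, since the covering radius of that lattice is $r\sqrt{2n}/2$, so the lattice would have to be refined. The lattice argument, for its part, gives the discrete criterion in terms of $\{\mu(B(z_k,r))\}$, which the paper reuses in the corollary and in the vanishing-Carleson theorem.

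One step should be tightened. Writing $P_{p,\alpha}f=P_{p,\alpha}g+P_{p,\alpha}\bar h$ presupposes $g,\bar h\in L^p_\alpha$ separately, and that is part of what you want to conclude. The paper simply assumes $g,h\in\mathcal{PH}^p_\alpha$, so the gap is shared. Your remark that $h\in L^p_\alpha$ forces $\bar h\in L^p_\alpha$ does not address it.

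The fix is to run your polar-coordinate computation on $P_{p,\alpha}f(z)$ itself:
\begin{itemize}
\item The integral converges absolutely because $f\in L^p_\alpha$.
\item On each sphere $|w|=\rho$, the mean of $\overline{h(w)}e^{z\bar w/\alpha}$ is $\overline{h(0)}=0$, by the mean value property of the entire function $w\mapsto h(w)e^{\bar z w/\alpha}$.
\item Integrating the Taylor series $g(w)=\sum_\beta g_\beta w^\beta$ term by term returns $g(z)$. This is legitimate because the resulting series is dominated by $(\pi\alpha)^n\sum_\beta|g_\beta||z^\beta|<\infty$.
\end{itemize}
Thus $g=P_{p,\alpha}f\in\mathcal{F}^p_\alpha$, and then $\bar h=f-g\in L^p_\alpha$ with exactly the bound you wrote.
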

\begin{proof}
	Let $\mu$ be a $p$-Carleson measure. Then for  $f(z)=e^{\frac{z \bar{a}}{\alpha}-\frac{|a|^2}{2\alpha}},$ we obtain
	\[
		\int_{B(a,r)}^{}\left| f(z)e^{-\frac{|z|^2}{2\alpha}} \right| ^{p}\, d\mu(z)\leq \int_{\mathbb{C}^{n}}^{}e^{-\frac{p|z-a|^2}{2\alpha}}\, dA(z)\leq  C.
	\]
	Since $\left| f(z)e^{-\frac{|z|^2}{2\alpha}} \right| ^p= e^{-\frac{p|z-a|^2}{2\alpha}}\geq e^{-\frac{pr^2}{2\alpha}},$ we conclude that
	\[
		\mu(B(a,r)) \leq C e^{\frac{pr^2}{2\alpha}}.
	\]

	Let $\mu$ satisfies the second condition. For $r>0$  and lattice $r\mathbb{Z}^{2n}\subset \mathbb{C}^{n}$ one can see that there exists $M$  such that for every $z \in \mathbb{C}^{n}$ belongs to at the most $M$ balls $B(z_k,2r),$ where $\left\{ z_k \right\} _k = r\mathbb{Z}^{2n}.$ Then for every holomorphic function $g \in \ph,$ we have
	\begin{align*}
		\int_{\mathbb{C}^{n}}^{}\left| g(z) e^{-\frac{|z|^2}{2\alpha}}\right| ^{p}\, d\mu(z)
		 & \leq \sum_{z_k}^{} \int_{B(z_k,r)}^{}\left| g(z)e^{-\frac{|z|^2}{2\alpha}} \right| ^{p}\, d\mu(z)                                              \\
		 & \leq \frac{C}{r^{2n}} \sum_{z_k}^{} \int_{B(z_k,r)}^{}\int_{B(z_k,2r)}^{}\left| g(w)e^{-\frac{|z|^2}{2\alpha}} \right| ^{p}\, dA(w) \, d\mu(z) \\
		 & \leq \frac{C}{r^{2n}}\sum_{z_k}^{} \mu(B(z_k,r))\int_{B(z_k,2r)}^{}\left| g(w)e^{-\frac{|w|^2}{2\alpha}} \right| ^{p}\, dA(w)                  \\
		 & \leq \frac{C e^{\frac{pr}{2\alpha}}}{r^{2n}}\sum_{z_k}^{} \int_{B(z_k,2r)}^{}\left| g(w)e^{-\frac{|w|^2}{2\alpha}} \right| ^{p}\, dA(z)        \\
		 & \leq \frac{M C e^{\frac{pr}{2\alpha}}}{r^{2n}} \int_{\mathbb{C}^{n}}^{}\left| g(w)e^{-\frac{|w|^2}{2\alpha}} \right| ^{p}\, dA(z) .
	\end{align*}
	Let $f =g+\bar{h} \in \ph$ where $g,h \in \ph$ are entire functions such that $h(0)=0.$ Then
	\begin{align*}
		     & \int_{\mathbb{C}^{n}}^{}\left| f(z)e^{-\frac{|z|^2}{2\alpha}} \right|^{p} \, d\mu(z)                                                                                              \\ \leq                                                                                                                    & \int_{\mathbb{C}^{n}}^{}\left| g(z)e^{-\frac{|z|^2}{2\alpha}} \right| ^p\, d\mu(z) + \int_{\mathbb{C}^{n}}^{}\left| h(z)e^{-\frac{|z|^2}{2\alpha}} \right| ^{p}	\, d\mu(z) \\
		\leq & \frac{M C e^{\frac{pr}{2\alpha}}}{r^{2n}} \int_{\mathbb{C}^{n}}^{}\left| g(z)e^{-\frac{|z|^2}{2\alpha}} \right| ^{p} + \left| h(z)e^{-\frac{|z|^2}{2\alpha}} \right| ^{p}\, dA(z) \\
	\end{align*}
	By boundedness of $P_{p,\alpha}$ we have that $\left\lVert g \right\rVert _{p,\alpha} = \left\lVert P_{p,\alpha} f \right\rVert _{p,\alpha} \leq \left\lVert P_{p,\alpha} \right\rVert  \left\lVert f \right\rVert _{p,\alpha}$ and
	\[
		\left\lVert h \right\rVert _{p,\alpha} = \left\lVert \overline{P_{p,\alpha}(\bar{f})}-\overline{f(0)} \right\rVert \leq (\left\lVert P_{p,\alpha} \right\rVert +1) \left\lVert f \right\rVert _{p,\alpha}.
	\]
	As a result, $\mu$  is a $p$ -Carleson measure.
\end{proof}
For a positive Borel measure $\mu$ on $\mathbb{C}^{n}$ we say that it is a vanishing $p$-Carleson measure if
\begin{equation}\label{eq:definition-of-a-vanishing-Fock-Carleson-measures}
	\lim_{m \to \infty} \int_{\mathbb{C}^{n}}^{}\left| f_m(z)e^{-\frac{|z|^2}{2\alpha}} \right|^{p} \, d\mu(z) = 0,
\end{equation}
for every sequence $\{f_m\},$ bounded in $\ph$ such that converges to 0 uniformly on compact subsets. The following results will give us a characterization of the compactness of Toeplitz operators on $\hph$.
\begin{theorem}\label{thm:vanishing-Fock-Carleson-measures}
	Let $p\geq 1,\alpha>0,r>0$ and $\mu$ is a positive Borel measure on $\mathbb{C}^{n}.$ Then $\mu$ is a vanishing $p$-Carleson measure if and only if $\mu\left( B(z,r) \right) \to 0,$ as $z \to \infty.$
\end{theorem}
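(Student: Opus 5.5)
For necessity we test the vanishing condition against normalized reproducing kernels; for sufficiency we localize the Carleson estimate of Theorem~\ref{tm:characterization-of-Carleson-measures} and split $\mathbb{C}^n$ into a large compact set and its complement.

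\emph{Necessity.} Suppose $\mu$ is a vanishing $p$-Carleson measure and take any sequence $a_m\to\infty$. Put
\[
f_m(z)=e^{\frac{z\bar a_m}{\alpha}-\frac{|a_m|^2}{2\alpha}} .
\]
These are entire, hence pluriharmonic. Since $|f_m(z)e^{-|z|^2/2\alpha}|=e^{-|z-a_m|^2/2\alpha}$, they are bounded in $\ph$ by a constant independent of $m$. On a compact set $|z|\le R$ we have $|f_m(z)|\le e^{(R|a_m|-|a_m|^2/2)/\alpha}\to0$, so $f_m\to0$ uniformly on compacts. Therefore
\[
e^{-pr^2/2\alpha}\mu(B(a_m,r))\le\int|f_me^{-|z|^2/2\alpha}|^p\,d\mu\to0 .
\]
As the sequence $a_m\to\infty$ was arbitrary, $\mu(B(z,r))\to0$ as $z\to\infty$.

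\emph{Sufficiency.} Assume $\mu(B(z,r))\to0$. In particular $\mu(B(z,r))$ is bounded, so $\mu$ is $p$-Carleson by Theorem~\ref{tm:characterization-of-Carleson-measures}. Let $\{f_m\}$ be bounded in $\ph$ and converge to $0$ uniformly on compacts. Write $f_m=g_m+\overline{h_m}$ with $h_m(0)=0$, where
\[
g_m=P_{p,\alpha}f_m,\qquad h_m=\overline{P_{p,\alpha}\bar f_m}-\overline{f_m(0)} .
\]
Boundedness of $P_{p,\alpha}$ shows that $\{g_m\}$ and $\{h_m\}$ are bounded in $\mathcal F^p_\alpha$. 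They also converge to $0$ uniformly on compacts. Indeed, the integral formula for $P_{\alpha}$ gives, for each $z$, $g_m(z)=\int f_m(w)\overline{K_\alpha(w,z)}e^{-|w|^2/\alpha}\,dA(w)$ up to a constant. Splitting this integral into $|w|\le R$ and $|w|>R$, the first part tends to $0$ by uniform convergence. The second part is small, uniformly in $m$ and locally uniformly in $z$, by H\"older's inequality together with the uniform $L^p_\alpha$ bound on $f_m$ and the $L^{p'}$-tail of the kernel. The same argument applies to $h_m$, using $f_m(0)\to0$. This reduces the problem to the holomorphic case for $g_m$ and $h_m$ separately, via $|f_m|^p\le 2^{p-1}(|g_m|^p+|h_m|^p)$.

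\emph{Holomorphic case.} For holomorphic $g_m$, fix $\varepsilon>0$ and choose $R$ such that $\mu(B(z_k,r))<\varepsilon$ for all lattice points with $|z_k|>R$. Repeat the lattice chain of inequalities from the proof of Theorem~\ref{tm:characterization-of-Carleson-measures}, restricted to those $z_k$, using Lemma~\ref{lm:point-evaluation-on-arbitrary-balls}. This gives
\[
\int_{|z|>R+r}|g_me^{-|z|^2/2\alpha}|^p\,d\mu\le C M\varepsilon\,\|g_m\|_{p,\alpha}^p\le C'\varepsilon .
\]
On the compact set $|z|\le R+r$, the measure $\mu$ is finite (it is covered by finitely many balls of bounded mass), and $g_m\to0$ uniformly there. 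Hence the remaining integral tends to $0$, and so $\limsup_m \int |g_m e^{-|z|^2/2\alpha}|^p\,d\mu\le C'\varepsilon$ for every $\varepsilon$.

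The main obstacle is the transfer of locally uniform convergence from $f_m$ to its holomorphic and antiholomorphic parts $g_m$ and $h_m$. If the projection-tail estimate above proves awkward, a fallback is a normal-families argument: $g_m$ and $h_m$ are bounded in $\mathcal F^p_\alpha$, hence locally bounded, so every subsequence has a further subsequence converging locally uniformly to some $(g,h)$. The limits satisfy $g+\bar h=0$ with $h(0)=0$, and uniqueness of the decomposition forces $g=h=0$. Thus the whole sequences converge to $0$ locally uniformly.
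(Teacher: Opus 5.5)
Your argument is correct, and it uses the same ingredients as the paper, assembled in a different order. The necessity half is the standard normalized-kernel test, which the paper simply imports from \cite[Thm 3.30]{KeheZhuAnalysisOnFockSpaces}.

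In the sufficiency half both proofs rely on the lattice covering, the holomorphic sub-mean-value estimate on the small-mass tail, uniform convergence on a compact set, and boundedness of $P_{p,\alpha}$. You decompose $f_m=g_m+\overline{h_m}$ first. That forces you to prove that $g_m$ and $h_m$ themselves tend to $0$ locally uniformly, after which you run the holomorphic argument on each piece.

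The paper splits the lattice sum before decomposing. On the finitely many balls $B(z_k,r)$, $k\le N$, it works with $f_m$ itself. It does this via Lemma~\ref{lm:local-point-evaluation-estimate-for-harmonic-functions}, although $\mu(B(0,R))<\infty$ together with $\sup_{|z|\le R}|f_m(z)|\to0$ already suffices. Only on the tail, where $\mu(B(z_k,r))<\epsilon$, does it pass to $|g_m|^p+|h_m|^p$. There it needs nothing about $g_m,h_m$ beyond the uniform norm bound coming from $P_{p,\alpha}$.

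So what you call the main obstacle can be bypassed entirely. What your route buys in exchange is a clean reduction to the holomorphic theorem. The sequences $\{g_m\}$ and $\{h_m\}$ are bounded in $\mathcal F^p_\alpha$ and locally uniformly null, so \cite[Thm 3.30]{KeheZhuAnalysisOnFockSpaces} could be quoted directly for each. Both of your proofs of the transfer step are valid, and the normal-families one is the cleaner.

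There are two small slips, both inherited from the paper:
\begin{enumerate}
\item Since $P_{p,\alpha}\bar f_m=h_m+\overline{g_m(0)}$ and $g_m(0)=f_m(0)$, the correct formula is $h_m=P_{p,\alpha}\bar f_m-\overline{f_m(0)}$, without the outer conjugation.
\item The balls $B(z_k,r)$ centred on $r\mathbb{Z}^{2n}$ do not cover $\mathbb{C}^n$ when $n\ge2$, because the covering radius of that lattice is $r\sqrt{2n}/2$. Take the lattice $\frac{r}{\sqrt{2n}}\mathbb{Z}^{2n}$ instead; it still gives bounded overlap of the balls $B(z_k,2r)$.
\end{enumerate}
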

\begin{proof}
	Notice that $\mu$ is a vanishing $p$-Carleson measure, then \eqref{eq:definition-of-a-vanishing-Fock-Carleson-measures} holds for every bounded sequence from $\mathcal{F}_\alpha^{p}$ (holomorphic Fock space). Consequently, by \cite[Thm 3.30]{KeheZhuAnalysisOnFockSpaces}, we have that $\mu(B(z,r)) \to 0,$ as $z \to \infty.$

	Let $\mu(B(z,r))\to 0,$ as $z \to \infty, \{z_k\}=r\mathbb{Z}^{2n}$ and $f_m=g_m +h_m$ a bounded sequence from $\ph$ such that $f_m$ converges uniformly to 0 on compact subsets of $\mathbb{C}^{n}.$
	Since $\mu(B(z_k,r))\to \infty,$ when $k \to 0,$ we can choose $N$ such that $\mu(B(z_k,r))<\epsilon$ for $k > N.$ Taking into account that $\left\{ \mu(B(z_k,r)) \right\} $ is bounded and following the idea from Theorem \ref{tm:characterization-of-Carleson-measures} we obtain
	\begin{equation}\label{eq:vanishing-fock-carleson-measures-inequality-in-proof}
		\begin{split}
			\int_{\mathbb{C}^{n}}^{}\left| f_m(z)e^{-\frac{|z|^2}{2\alpha}} \right| ^{p}\, d\mu(z)
			\leq & \sum_{k=1}^{N}  \int_{B(z_k,r)}^{}|f_m(z)|^p e^{-p\frac{|z|^2}{2\alpha}}\, d\mu(z)                                                                                                                      \\
			     & + \frac{C \epsilon}{r^{2n}} \sum_{k=N+1}^{\infty} \int_{B(z_k,2r)}^{}\left( \left| g_m(z) e^{-\frac{|z|^2}{2\alpha}} \right| ^{p}+\left| h_m(z) e^{-\frac{|z|}{2\alpha}} \right| ^{p} \right) \, dA(z).
		\end{split}
	\end{equation}
	Using  Lemma \ref{lm:local-point-evaluation-estimate-for-harmonic-functions} in the first term of the previous expression, we derive the following upper bound
	\begin{align*}
		\sum_{k=1}^{N}  \int_{B(z_k,r)}^{}|f_m(z)|^p e^{-p\frac{|z|^2}{2\alpha}}\, d\mu(z) \leq & \sum_{k=1}^{N} \int_{B(z_k,r)}^{}C_p \frac{e^{\frac{r^2+2r|z|}{2\alpha}}}{r^{2n}} \int_{B(z,r)}^{}|f(w)|^p e^{-p \frac{|z|^2}{2\alpha}}\, dA(w) \, d\mu(z).
	\end{align*}
	Let $R$ be such that $\cup_{k=1}^N B(z_k,2r) \subset B(0,R)$ and let $M$ be such number that every $z \in \mathbb{C}^{n}$ is contained in most $M$ ball $B(z_k,2r).$  Then for every $z \in B(z_k,r), k \in \{1,\dots ,N\},$ the inequality $e^{\frac{r^2+2r|z|}{2\alpha}}\leq e^{\frac{r^2+2rR}{2\alpha}}$ holds; hence
	\begin{align*}
		\sum_{k=1}^{N}  \int_{B(z_k,r)}^{}|f_m(z)|^p e^{-p\frac{|z|^2}{2\alpha}}\, d\mu(z) \leq & C_p \frac{e^{\frac{r^2+2rR}{2\alpha}}}{r^{2n}} \sum_{k=1}^{N} \int_{B(z_k,2r)}^{} |f_m(w)|^p e^{-p \frac{|z|^2}{2\alpha}}\, dA(w) \\
		\leq                                                                                    & M C_p \frac{e^{\frac{r^2+2rR}{2\alpha}}}{r^{2n}} \int_{B(0,R)}^{} |f_m(w)|^p e^{-p \frac{|z|^2}{2\alpha}}\, dA(w) \to 0,
	\end{align*}
	as $m \to \infty.$ For the second term in \eqref{eq:vanishing-fock-carleson-measures-inequality-in-proof} we obtain
	\begin{align*}
		     & \frac{C \epsilon}{r^{2n}} \sum_{k=N+1}^{\infty} \int_{B(z_k,2r)}^{}\left( \left| g_m(z) e^{-\frac{|z|^2}{2\alpha}} \right| ^{p}+\left| h_m(z) e^{-\frac{|z|}{2\alpha}} \right| ^{p} \right) \, dA(z) \\
		\leq & \frac{MC \epsilon}{r^{2n}} \int_{\mathbb{C}^{n}}^{}\left( \left| g_m(z) e^{-\frac{|z|^2}{2\alpha}} \right| ^{p}+\left| h_m(z) e^{-\frac{|z|}{2\alpha}} \right| ^{p} \right) \, dA(z)                 \\
		\leq & \frac{(2 \left\lVert P_\alpha \right\rVert +1) MC \epsilon}{r^{2n}} \left\lVert f_m \right\rVert_{p,\alpha} \leq  C' \epsilon.
	\end{align*}
	Therefore, $\limsup_{m \to \infty} \int_{\mathbb{C}^{n}}^{} \left| f_m(z)e^{-\frac{|z|^2}{2\alpha}} \right| ^{p}d\mu(z) \leq C' \epsilon,$ which implies that $\mu$ is vanishing. 
\end{proof}
\begin{corollary}
	Suppose that $\mu$ is a positive Borel measure on $\mathbb{C}^{n},r>0$ and $\{z_k\}$ is any arrangement into a sequence of the lattice $r\mathbb{Z}^{2n}.$ Then
	\begin{enumerate}
		\item $\mu$ is a $p$-Carleson measure if and only if $\{\mu(B(z_k,r))\}$ is in $l^\infty.$
		\item $\mu$ is a vanishing $p$-Carleson measure if and only if the sequence $\{\mu(B(z_k,r))\}$ is in $c_0 .$
	\end{enumerate}
\end{corollary}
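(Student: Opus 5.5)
The corollary follows from Theorems \ref{tm:characterization-of-Carleson-measures} and \ref{thm:vanishing-Fock-Carleson-measures} once we show that the continuous conditions on $\mu(B(a,r))$, $a\in\mathbb{C}^n$, are equivalent to the discrete conditions on the lattice values $\mu(B(z_k,r))$. The lattice points are the special choices $a=z_k$, so one direction in each part is immediate. The work lies in the reverse direction, which we handle with a covering argument.

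For part (1), suppose first that $\mu$ is a $p$-Carleson measure. Theorem \ref{tm:characterization-of-Carleson-measures}, applied with the given $r$, gives $\sup_a\mu(B(a,r))<\infty$. In particular $\{\mu(B(z_k,r))\}\in l^\infty$. Conversely, suppose $\sup_k\mu(B(z_k,r))=C<\infty$. The balls $B(z_k,r)$ cover $\mathbb{C}^n$, since every point lies within distance $r\sqrt{2n}/2$ of the lattice; if that distance exceeds $r$, we refine the covering by a fixed scaling below. It is cleanest to cover each ball $B(a,r)$ by lattice balls directly. A ball $B(a,r)$ meets only those $B(z_k,r)$ with $|z_k-a|<2r$, and the number of such lattice points is bounded by a constant $N=N(n)$ independent of $a$. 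We must also check that these lattice balls cover $B(a,r)$; if the covering radius $r\sqrt{2n}/2$ exceeds $r$, this needs extra care. In that case we instead apply Theorem \ref{tm:characterization-of-Carleson-measures} with a smaller radius. Since that theorem holds for every fixed radius, it suffices to bound $\mu(B(a,\rho))$ for some small $\rho>0$. Choosing $\rho=r/\sqrt{2n}$, each ball $B(a,\rho)$ has diameter $2\rho$ and lies within distance $\rho\le r(1-\sqrt{2n}/2)$ of its nearest lattice point, so it is contained in some $B(z_k,r)$ provided $\sqrt{2n}/2<1$. This works only for $n=1$, so in general we proceed differently. For each $a$ we pick a lattice point $z_k$ with $|a-z_k|\le r\sqrt{2n}/2$. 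The ball $B(z_k,r)$ contains $B(a,\rho)$ whenever $\rho\le r-|a-z_k|$, but that quantity can be negative, which is the main obstacle. To get around it, we note that $B(a,r)$ can be covered by a fixed number $N$ of balls $B(z_k,r)$, using points of the lattice $r\mathbb{Z}^{2n}$ near $a$ together with a finite fixed family of translates; in any case we need some lower bound on $\mu$-mass relations.

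The simplest fully rigorous route to (1) avoids geometry. It reruns the proof of Theorem \ref{tm:characterization-of-Carleson-measures}, where the second direction used only the values $\mu(B(z_k,r))$ for lattice points $z_k$. We need, for $z\in B(z_k,r)$, the estimate of $|g(z)e^{-|z|^2/2\alpha}|^p$ by an integral over $B(z,r)\subset B(z_k,2r)$ (Lemma \ref{lm:point-evaluation-on-arbitrary-balls}), and we need the balls $B(z_k,r)$ to cover $\mathbb{C}^n$. If they do not, we replace $r$ by a larger radius $r'$ inside the point estimate while keeping the masses $\mu(B(z_k,r))$. This gives finite overlap, so the proof goes through verbatim and $\mu$ is $p$-Carleson.

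For part (2), the forward direction follows from Theorem \ref{thm:vanishing-Fock-Carleson-measures}, because $|z_k|\to\infty$ along any enumeration of the lattice. The converse is exactly the proof of Theorem \ref{thm:vanishing-Fock-Carleson-measures}, which uses only $\mu(B(z_k,r))<\epsilon$ for $k>N$ together with the boundedness from part (1). I expect the only delicate point to be the covering and overlap bookkeeping just described.
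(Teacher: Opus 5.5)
Your forward implications are fine: take $a=z_k$ in Theorem \ref{tm:characterization-of-Carleson-measures}, and for part (2) use that $|z_k|\to\infty$ along any enumeration of the lattice. Your final route is to rerun the sufficiency halves of the proofs of Theorems \ref{tm:characterization-of-Carleson-measures} and \ref{thm:vanishing-Fock-Carleson-measures}, which only involve the masses $\mu(B(z_k,r))$. That is exactly how the paper intends the corollary to be read off; it gives no separate proof.

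The gap is your repair of the covering problem. ``Replace $r$ by a larger radius $r'$ inside the point estimate while keeping the masses $\mu(B(z_k,r))$'' does nothing. The first inequality of that proof, $\int_{\mathbb{C}^n}|g(z)e^{-|z|^2/2\alpha}|^p\,d\mu(z)\le\sum_k\int_{B(z_k,r)}|g(z)e^{-|z|^2/2\alpha}|^p\,d\mu(z)$, requires the balls $B(z_k,r)$ to carry all of $\mu$. The radius used in Lemma \ref{lm:point-evaluation-on-arbitrary-balls} plays no role in it. If you integrate against $\mu$ over larger sets $B(z_k,r')$, the masses $\mu(B(z_k,r'))$ appear instead, and the hypothesis does not control them. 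Your earlier idea of adding ``a finite fixed family of translates'' fails for the same reason. Any mass of $\mu$ in $\mathbb{C}^n\setminus\bigcup_k B(z_k,r)$ is invisible to the sequence $\{\mu(B(z_k,r))\}$. As you noticed, this set is nonempty as soon as the covering radius $r\sqrt{n/2}$ of $r\mathbb{Z}^{2n}$ is at least $r$ (for open balls). The same defect affects your converse in part (2).

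No argument can close this gap, because for $n\ge3$ the statement is false. Identify $\mathbb{C}^n$ with $\mathbb{R}^{2n}$ and let $w_j=(jr,0,\dots,0)+\frac r2(1,\dots,1)$. Each coordinate of $w_j$ is at distance $r/2$ from $r\mathbb{Z}$, so $\mathrm{dist}(w_j,r\mathbb{Z}^{2n})=r\sqrt{n/2}>r$.
\begin{itemize}
\item Part (1): for $\mu=\sum_j j\,\delta_{w_j}$ every lattice mass $\mu(B(z_k,r))$ is $0$. Yet $f_j(z)=e^{z\bar w_j/\alpha-|w_j|^2/2\alpha}$ has $\|f_j\|_{p,\alpha}=1$ and $\int|f_j(z)e^{-|z|^2/2\alpha}|^p\,d\mu(z)\ge j$, so $\mu$ is not $p$-Carleson.
\item Part (2): for $\mu=\sum_j\delta_{w_j}$ the lattice sequence is again identically $0$. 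The same $f_j$, which tend to $0$ uniformly on compact sets, give $\int|f_j(z)e^{-|z|^2/2\alpha}|^p\,d\mu(z)\ge1$, so $\mu$ is not vanishing.
\end{itemize}
For $n=2$ the points $w_j$ lie at distance exactly $r$ from the lattice, so the example persists with open balls.

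The paper's sufficiency arguments tacitly assume $\bigcup_k B(z_k,r)=\mathbb{C}^n$. This holds for $n=1$, the one-variable setting being transcribed, and there your argument is complete. For general $n$ the corollary has to be modified. One option is a finer lattice $\delta\mathbb{Z}^{2n}$ with $\delta\sqrt{n/2}<r$; another is to use the masses $\mu(B(z_k,R))$ with $R>r\sqrt{n/2}$. Then the balls cover $\mathbb{C}^n$, the enlarged balls still have bounded overlap, and your rerun of the two proofs goes through unchanged. The covering issue does not affect the two theorems themselves: their hypotheses control $\mu(B(a,r))$ at every $a$, so any sufficiently fine lattice can be used in their proofs.
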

\section{Toeplitz operators}
For every $g \in L^{\infty}(\mathbb{C}^{n}),$ we define  $T_g : \hph \to \hph$ with
\[
	T_g(f)=P_{p,\alpha}^{ph}(fg),
\]
where $P_{p,\alpha}^{ph}$ is an orthogonal projection from $L^2(\mathbb{C}^{n},e^{-\frac{|z|^2}{\alpha}}dA(z))$ to $\hph.$ The previously defined operator we call Toeplitz operator with symbol $g.$ One can easily see that $T_g$ is bounded and $\left\lVert T_g \right\rVert \leq \left\lVert g \right\rVert _{\infty}.$ Moreover,
\[
	T_g(f) (z)= \frac{1}{\left( \alpha \pi \right) ^{n}}\int_{\mathbb{C}^{n}}^{}f(w)g(w)\Kph(z,w)e^{-\frac{|w|^2}{\alpha}}\, dA(w).
\]
The previous formula motivates the definition of Toeplitz operators for a wider class of symbols.
If $\mu$ is a Borel measure on $\mathbb{C}^{n},$ then we define
\begin{equation}\label{eq:definition-of-Toeplitz-operator-for-measures}
	T_\mu(f)(z)=\int_{\mathbb{C}^{n}}^{}\Kph(z,w) f(w)e^{- \frac{|w|^2}{\alpha}}	\, d\mu(w), \quad z \in  \mathbb{C}^{n}.
\end{equation}
In order that the previous expression is defined for $f \in \hph,$ we consider only measures such that
\begin{equation}\label{eq:condition-for-measures-for-toeplitz-operator}
	\int_{\mathbb{C}^{n}}^{}\left| \Kph(z,w) \right| ^2 e^{-\frac{|w|^2}{\alpha}}	\, d|\mu|(w)<\infty, \quad \forall z \in  \mathbb{C}^{n}.
\end{equation}
Notice that if \eqref{eq:condition-for-measures-for-toeplitz-operator} holds, we have
\begin{gather*}
	\int_{\mathbb{C}^{n}}^{}\left| \Kph (z,w) \right| e^{-\frac{|w|^2}{\alpha}}\, d|\mu|(w) \\ \leq
	\left( \int_{\mathbb{C}^{n}}^{}\left| \Kph(z,w) \right| ^{2}e^{-\frac{|w|^2}{\alpha}}\, d|\mu|(w) \right) ^{1/2} \left( \int_{}^{}e^{-\frac{|w|^2}{\alpha}}\, d|\mu|(w) \right) ^{1/2},
\end{gather*}
where the finiteness of the second factor we obtain from \eqref{eq:condition-for-measures-for-toeplitz-operator} for $z=0.$
Therefore, $T_\mu(f)$ is defined whenever $f$  is a finite linear combination of kernel functions, i.e., on the fundamental subset of $\hph.$
If $d\mu = \varphi dA$, then we simply write $T_\varphi$ instead of $T_\mu$. From now on, we will restrict our attention to Toeplitz operators with positive symbols.
\begin{proposition}\label{prop:correspondence-between-FC-measures-and-boundedness}
	Let $\mu$ be a positive Borel measure on $\mathbb{C}^{n}.$ The operator $T_\mu$ is bounded if and only if $\mu $ is a $2$-Carleson measure.
\end{proposition}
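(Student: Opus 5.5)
The plan is to reduce boundedness of $T_\mu$ to the Carleson embedding through the quadratic form identity
\[
\langle T_\mu f, f\rangle = \int_{\mathbb{C}^{n}} \left| f(w)\right|^2 e^{-\frac{|w|^2}{\alpha}}\, d\mu(w),
\]
valid first for $f$ in the dense set $\mathrm{Lin}\{\Kph(\cdot,w)\}$. Under \eqref{eq:condition-for-measures-for-toeplitz-operator} this identity comes from Fubini together with the reproducing property $\langle \Kph(\cdot,w), f\rangle = \overline{f(w)}$. More generally, for $f,g$ in this set we get $\langle T_\mu f,g\rangle=\int f\bar g\, e^{-|w|^2/\alpha}\,d\mu$. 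One normalisation must be handled throughout: the weight in the Toeplitz definition is $e^{-|w|^2/\alpha}$ without the factor $(\alpha\pi)^{-n}$, so constants must be matched against $\|\cdot\|_{2,\alpha}$. With $p=2$ one has $|f e^{-|w|^2/(2\alpha)}|^2=|f|^2e^{-|w|^2/\alpha}$, so the right-hand side is exactly the $2$-Carleson integral.

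\textbf{Sufficiency.} Suppose $\mu$ is $2$-Carleson. By the identity and Cauchy--Schwarz in $L^2(\mu)$, for $f,g$ in the dense span,
\[
|\langle T_\mu f,g\rangle| \le \Big(\int |f|^2e^{-\frac{|w|^2}{\alpha}}d\mu\Big)^{1/2}\Big(\int |g|^2e^{-\frac{|w|^2}{\alpha}}d\mu\Big)^{1/2}\le C^2\|f\|_{2,\alpha}\|g\|_{2,\alpha}.
\]
Hence $T_\mu$ extends to a bounded operator with $\|T_\mu\|\le C^2$. I would also check that this extension agrees with the integral formula \eqref{eq:definition-of-Toeplitz-operator-for-measures} for every $f\in\hph$. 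Pointwise, the integral converges absolutely by Cauchy--Schwarz, since $\Kph(z,\cdot)\in\hph$ and both factors are controlled by the Carleson embedding. It is continuous in $f$ for each fixed $z$, so it matches the extension.

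\textbf{Necessity.} Suppose $T_\mu$ is bounded. I would test the quadratic form on the normalized kernels $\kph(\cdot,a)$:
\[
\|T_\mu\|\ge \widetilde{T_\mu}(a)=\int \frac{|\Kph(w,a)|^2}{2e^{|a|^2/\alpha}-1}e^{-\frac{|w|^2}{\alpha}}\,d\mu(w)\ge \int_{B(a,r)}(\cdots)\,d\mu.
\]
The main obstacle is a lower bound for $|\Kph(w,a)|$ on $B(a,r)$, because the real part of $e^{w\bar a/\alpha}+e^{a\bar w/\alpha}$ could cancel. I would write $\Kph(w,a)=2\,\mathrm{Re}\,e^{w\bar a/\alpha}-1+2i\,\mathrm{Im}\ldots$; more precisely, with $u=e^{w\bar a/\alpha}$ and $v=e^{a\bar w/\alpha}$,
\[
|\Kph(w,a)|\ge |u+v|-1,\qquad |u+v|=2|u|\,|\cos(\mathrm{Im}(w\bar a)/\alpha)|.
\]
Cancellation can occur, so the plan is to avoid it. First, for large $|a|$, pick a small radius $\rho$ where $\mathrm{Im}(w\bar a)-\mathrm{Im}(a\bar a)=\mathrm{Im}((w-a)\bar a)$ stays small. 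That only works on a shrinking region, so instead I would avoid $\Kph$ altogether. The cleaner route uses the test functions $f_a(w)=e^{w\bar a/\alpha-|a|^2/(2\alpha)}$, which are holomorphic, lie in $\hph$, and have norm comparable to $1$. Then
\[
\langle T_\mu f_a,f_a\rangle=\int |f_a|^2e^{-\frac{|w|^2}{\alpha}}d\mu=\int e^{-\frac{|w-a|^2}{\alpha}}d\mu(w)\ge e^{-\frac{r^2}{\alpha}}\mu(B(a,r)).
\]
Since $\langle T_\mu f_a,f_a\rangle\le\|T_\mu\|\,\|f_a\|^2_{2,\alpha}$, it follows that $\mu(B(a,r))$ is bounded uniformly in $a$, and Theorem \ref{tm:characterization-of-Carleson-measures} gives that $\mu$ is $2$-Carleson.

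The remaining technical point is justifying the quadratic form identity for $f_a$, which is not in the finite kernel span a priori. I would handle it by noting $f_a=e^{-|a|^2/(2\alpha)}K_\alpha(\cdot,a)$. One can either approximate $f_a$ in norm by elements of the span and apply Fatou's lemma on the right-hand side, or, more simply, note that $f_a$ satisfies the integrability requirements by \eqref{eq:condition-for-measures-for-toeplitz-operator} so that Fubini applies directly.
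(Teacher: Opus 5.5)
Your sufficiency argument is essentially the paper's. The Carleson inequality makes $(f,g)\mapsto\int f\bar g\,e^{-|w|^2/\alpha}\,d\mu$ a bounded form on $\hph$, and the operator it induces is identified with $T_\mu$ via $Tf(z)=\langle Tf,\Kph(\cdot,z)\rangle$. It is cleanest to define $T$ from the form first, as the paper does, since $T_\mu f\in\hph$ is not known in advance.

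Your necessity argument is correct but follows a different route. The paper applies $\langle T_\mu f,f\rangle\le\|T_\mu\|\,\|f\|^2$ to an arbitrary $f\in\hph$, passing from the kernel span to all of $\hph$ by Fatou's lemma. This gives the $2$-Carleson inequality directly, with constant controlled by $\|T_\mu\|$ and without Theorem~\ref{tm:characterization-of-Carleson-measures}. You test only on the holomorphic kernels $f_a$, get $\sup_a\mu(B(a,r))\le e^{r^2/\alpha}\|T_\mu\|$, and then invoke the implication (2)$\Rightarrow$(1) of Theorem~\ref{tm:characterization-of-Carleson-measures}.

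The approximation-plus-Fatou step you use for $f_a$ works verbatim for every $f\in\hph$, so the detour is redundant. It also imports the harder half of the Carleson characterization (lattice covering, and the bounded holomorphic projection used to split $f=g+\bar h$) and gives a worse constant. In exchange, your route produces an explicit geometric bound on $\mu(B(a,r))$ in terms of $\|T_\mu\|$. Your observation about the kernel is also sound: $\Kph(w,a)=2\,\mathrm{Re}\,e^{w\bar a/\alpha}-1$ is real-valued and, for large $|a|$, changes sign inside $B(a,r)$. That is why $f_a$, which does lie in $\hph$, is a better test function than $\kph(\cdot,a)$ for such lower bounds.

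Two small points on justification. First, the identity $\langle T_\mu f,g\rangle=\int f\bar g\,e^{-|w|^2/\alpha}\,d\mu$ on the kernel span does not need Fubini, whose absolute integrability is not evident from \eqref{eq:condition-for-measures-for-toeplitz-operator} alone. Once $T_\mu f\in\hph$, the reproducing property gives $\langle T_\mu f,\Kph(\cdot,b)\rangle=T_\mu f(b)$, which is exactly the required integral since $\Kph(b,w)=\overline{\Kph(w,b)}$. Second, of your two justifications for the estimate at $f_a$, rely on approximation plus Fatou. The ``Fubini applies directly'' alternative would also require knowing that the bounded extension of $T_\mu$ agrees with the integral formula at $f_a$, which is not yet available in the necessity direction.
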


\begin{proof}
	Let $T_\mu$ is bounded. By the boundedness of $T_\mu$, we have $|\langle T_\mu f,f \rangle|\leq \left\lVert T_\mu \right\rVert \left\lVert f \right\rVert ^2$ for every $f \in \hph$, and by Fatou's lemma, we obtain
	\[
		\int_{\mathbb{C}^{n}}^{}|f(w)|^2 e^{-\frac{|w|^2}{\alpha}}\, d\mu(w)\leq \left\lVert T_\mu \right\rVert \int_{\mathbb{C}^{n}}^{}\left| f(w) \right| ^2e^{-\frac{|w|^2}{\alpha}}\, dA(w).
	\]
	Hence, $\mu$ is a $2$-Carleson measure.

	If $\mu$ is a $2$-Carleson measure, then
	\[
		\int_{\mathbb{C}^{n}}^{}|f(w)|^2 e^{-\frac{|w|^2}{\alpha}}\, d\mu(w) \leq  C \int_{\mathbb{C}^{n}}^{}\left| f(w) \right| ^2e^{-\frac{|w|^2}{\alpha}}\, dA(w),
	\]
	Using the previous inequality and Cauchy-Schwarz inequality, we obtain that
	\[
		\langle f,g \rangle_{\mu} = \int_{\mathbb{C}^{n}}^{}f(z)\overline{g(z)}e^{-\frac{|z|^2}{\alpha}}\, d\mu(z)
	\]
	is a bounded sesquilinear form on $\hph;$ therefore, there exists a bounded operator $T:\hph \to \hph$ such that $\langle Tf,g \rangle=\langle f,g \rangle_\mu.$ Using the reproducing kernels we obtain
	\[
		Tf(z)=\langle Tf,\Kph(\cdot,z) \rangle =\langle f,\Kph(\cdot,z) \rangle_\mu = \int_{\mathbb{C}^{n}}^{}f(w)\Kph(w,z)e^{-\frac{|w|^2}{\alpha}}\, d\mu(w) = T_\mu f(z),
	\]
	hence $T_\mu$ is bounded.
\end{proof}
In $\hph$ the condition \eqref{eq:definition-of-a-vanishing-Fock-Carleson-measures} is equivalent to weak convergence of the sequence of functions $(f_m)$ to the zero function. Indeed, $f_m \stackrel{w}{\to} 0$ is equivalent to $(f_m)$ being bounded and $\langle f_m,\kph(\cdot,w) \rangle\to 0,$ for every $w \in \mathbb{C}^{n};$ that is,  $\left( f_m \right) $ is bounded and $f_m(w)\to 0$ for every $w \in \mathbb{C}^{n}.$
Since
\[
	\left| f_m(z) \right| =\left| \langle f_m,\Kph(\cdot,z) \rangle \right|  \leq \left\lVert f_m \right\rVert_{2,\alpha}\cdot \left\lVert \Kph(\cdot,z) \right\rVert_{2,\alpha} \leq \left\lVert f_m \right\rVert_{2,\alpha} \sqrt{ 2e^{\frac{|z|^2}{\alpha}}-1} ,
\]
we have that $f_m$ is uniformly bounded on compact subsets of $\mathbb{C}^{n}.$ By the compactness principle (see \cite[Thm 2.2.11]{klimekPluripotentialTheory1991}), we have that $(f_m)$ is normal. Since, $f_n(w) \to 0$ for every $w \in \mathbb{C}^{n}$ and since $(f_n)$ is normal, we conclude that $f_n$ converges uniformly on compact subsets of $\mathbb{C}^{n}$.

\begin{proposition}
	Operator $T_\mu$ is a compact operator if and only if $\mu$ is a vanishing $2$-Carleson measure.
\end{proposition}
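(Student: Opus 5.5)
The plan is to work through the sesquilinear form $\langle T_\mu f,f\rangle=\int_{\mathbb{C}^n}|f(w)|^2e^{-\frac{|w|^2}{\alpha}}\,d\mu(w)$. This identity is available once $T_\mu$ is bounded (Proposition \ref{prop:correspondence-between-FC-measures-and-boundedness}). We also use the observation just before the statement: in $\hph$, $f_m\wto 0$ if and only if $(f_m)$ is bounded and $f_m\to 0$ uniformly on compact subsets. Both directions of the statement then reduce to the fact that a positive operator $T$ is compact if and only if $\langle Tf_m,f_m\rangle\to 0$ for every weakly null sequence $(f_m)$.

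\emph{Necessity.} Suppose $T_\mu$ is compact. Then $T_\mu$ is bounded, so $\mu$ is a $2$-Carleson measure by Proposition \ref{prop:correspondence-between-FC-measures-and-boundedness}, and the form identity holds on all of $\hph$. Let $(f_m)$ be bounded in $\hph$ and converge to $0$ uniformly on compact sets. By the remark above, $f_m\wto 0$, so compactness gives $\|T_\mu f_m\|\to 0$. Hence
\[
\int_{\mathbb{C}^n}\left|f_m(w)e^{-\frac{|w|^2}{2\alpha}}\right|^2\,d\mu(w)=\langle T_\mu f_m,f_m\rangle\le \|T_\mu f_m\|\,\|f_m\|\to 0,
\]
which is exactly \eqref{eq:definition-of-a-vanishing-Fock-Carleson-measures} with $p=2$. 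So $\mu$ is a vanishing $2$-Carleson measure.

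\emph{Sufficiency.} Suppose $\mu$ is a vanishing $2$-Carleson measure. Theorem \ref{thm:vanishing-Fock-Carleson-measures} gives $\mu(B(z,r))\to 0$, so $\sup_a\mu(B(a,r))<\infty$. Theorem \ref{tm:characterization-of-Carleson-measures} then shows $\mu$ is $2$-Carleson, and $T_\mu$ is bounded and positive. Since $\langle T_\mu f,f\rangle=\|T_\mu^{1/2}f\|^2$, it suffices to show $T_\mu^{1/2}$ is compact, because then $T_\mu=T_\mu^{1/2}T_\mu^{1/2}$ is compact. Take any $f_m\wto 0$. By the remark, $(f_m)$ is bounded and converges to $0$ uniformly on compacts, so the vanishing Carleson condition gives
\[
\|T_\mu^{1/2}f_m\|^2=\int_{\mathbb{C}^n}|f_m|^2e^{-\frac{|w|^2}{\alpha}}\,d\mu\to 0.
\]
Thus $T_\mu^{1/2}$ maps weakly null sequences to norm null sequences. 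On a Hilbert space this characterizes compactness, since the unit ball is weakly sequentially compact. Hence $T_\mu^{1/2}$, and therefore $T_\mu$, is compact.

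\emph{Main obstacle.} The delicate point is the identification $\langle T_\mu f,f\rangle=\int|f|^2e^{-|w|^2/\alpha}\,d\mu$ for all $f\in\hph$, not only for finite combinations of kernels. This is handled by the construction in the proof of Proposition \ref{prop:correspondence-between-FC-measures-and-boundedness}, where $T_\mu$ is the operator representing the bounded form $\langle\cdot,\cdot\rangle_\mu$. The other ingredient is the equivalence between weak convergence and bounded locally uniform convergence, which the paper has already established.
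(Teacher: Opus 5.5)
Your proof is correct. The necessity half is essentially the paper's argument; you only make the step $\langle T_\mu f_m,f_m\rangle\le\|T_\mu f_m\|\,\|f_m\|$ explicit. The sufficiency half takes a slightly different route.

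\textbf{How the paper does sufficiency.} The paper never forms $T_\mu^{1/2}$. It writes $\|T_\mu f\|_{2,\alpha}^2=\langle f,T_\mu f\rangle_\mu$ and applies Cauchy--Schwarz in $L^2(e^{-|w|^2/\alpha}d\mu)$. It then uses the $2$-Carleson inequality on the function $T_\mu f\in\hph$ to get
\[
\|T_\mu f\|_{2,\alpha}^2\le C\,\|T_\mu\|\,\|f\|_{2,\alpha}\Bigl(\int_{\mathbb{C}^n}|f(w)|^2e^{-\frac{|w|^2}{\alpha}}\,d\mu(w)\Bigr)^{1/2}.
\]
So $T_\mu$ itself sends weakly null sequences to norm-null ones.

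\textbf{What each route buys.} Your factorization $T_\mu=T_\mu^{1/2}T_\mu^{1/2}$ reaches the same conclusion through the functional calculus. It isolates the clean abstract fact that a positive operator is compact if and only if its quadratic form tends to zero along weakly null sequences. The paper's estimate is more elementary, since it needs no square roots: it amounts to Cauchy--Schwarz for the form $\langle\cdot,\cdot\rangle_\mu$ plus the Carleson bound.

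\textbf{A small gap in deducing the Carleson property.} You pass through Theorem \ref{thm:vanishing-Fock-Carleson-measures} and then $\sup_a\mu(B(a,r))<\infty$. That step silently uses that $\mu$ is finite on compact sets, which holds under \eqref{eq:condition-for-measures-for-toeplitz-operator} at $z=0$. You can bypass it directly from the definition. If $\mu$ were not $2$-Carleson, there would be $f_m$ with $\|f_m\|_{2,\alpha}=1$ and $\int|f_m|^2e^{-|w|^2/\alpha}\,d\mu\ge m^2$. Then $g_m=f_m/\sqrt m$ tends to $0$ in norm, hence uniformly on compacts, while $\int|g_m|^2e^{-|w|^2/\alpha}\,d\mu\ge m$. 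This contradicts the vanishing condition.
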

\begin{proof}
	Let $T_\mu$ be a compact operator and $\left( f_m \right) $ be a bounded sequence of functions from $\hph$, such that it converges to 0 uniformly on compact subsets, that is, by the previous discussion, $f_m \wto 0$. Since $T_\mu$ is compact, it follows that $\left\lVert T_\mu f_m \right\rVert \to 0$ (see \cite[Thm 1.14]{zhuOperatorTheoryFunction2007}), hence
	\[
		\int_{\mathbb{C}^{n}}^{}\left| f_m(z) \right| ^2e^{-\frac{|z|^2}{\alpha}}\, d\mu(z) = \langle T_\mu f_m,f_m \rangle  \to 0,
	\]
	as $m \to \infty.$ Therefore, $\mu$ is a vanishing $2$-Carleson measure.

	If $\mu$ is a vanishing $2$-Carleson measure, then $\mu$ is a $2$-Carleson measure, and $T_\mu$ is bounded. For every $f \in \hph$ we have
	\begin{align*}
		\left\lVert T_\mu f \right\rVert_{2,\alpha}^2
		 & = \langle T_\mu f,T_\mu f \rangle = \int_{\mathbb{C}^{n}}^{}T_\mu f(z) \overline{f(z)} e^{-\frac{|z|^2}{\alpha}}\, d\mu(z)                                                                                                  \\
		 & \leq\left( \int_{\mathbb{C}^{n}}^{}\left| T_\mu f(z) \right|^2e^{-\frac{|z|^2}{\alpha}} \, d\mu(z) \right) ^{1/2} \left( \int_{\mathbb{C}^{n}}^{}\left| f(z) \right| ^2 e^{-\frac{|z|^2}{\alpha}}\, d\mu(z) \right) ^{1/2}  \\
		 & \leq C \left( \int_{\mathbb{C}^{n}}^{}\left| T_\mu f(z) \right|^2e^{-\frac{|z|^2}{\alpha}} \, dA(z) \right) ^{1/2} \left( \int_{\mathbb{C}^{n}}^{}\left| f(z) \right| ^2 e^{-\frac{|z|^2}{\alpha}}\, d\mu(z) \right) ^{1/2} \\
		 & \leq  C \left\lVert T_\mu \right\rVert \left\lVert f \right\rVert _{2,\alpha} \left( \int_{\mathbb{C}^{n}}^{}\left| f (z)\right| ^{2}e^{-\frac{|z|^2}{\alpha}}\, d\mu(z)  \right)^{1/2}.
	\end{align*}
	Hence, if $(f_m)$ is a bounded sequence functions from $\hph$ such that $f_m$ converge uniformly on compact subsets of $\mathbb{C}^{n},$ that is, by the previous discussion $f_m \wto 0,$ we have that $\left\lVert T_\mu f_m \right\rVert_{2,\alpha} \to 0.$ As a consequence we conclude that $T_\mu$ is compact (see \cite[Thm 1.14]{zhuOperatorTheoryFunction2007}).
\end{proof}
\subsection{Berezin transform of a measure}
If $\mu$ is a measure satisfying condition \eqref{eq:condition-for-measures-for-toeplitz-operator}, we define the Berezin transform of $\mu$ by
\begin{align*}
	\widetilde{\mu}(z)= \int_{\mathbb{C}^{n}}^{}\frac{\left| \Kph(z,u) \right| ^2}{e^{\frac{|u|^2}{\alpha}}\left( 2e^{\frac{|z|^2}{\alpha}}-1 \right) }\, d\mu(u),\quad z \in \mathbb{C}^{n}.
\end{align*}
Also, if $T_\mu : \hph \to \hph$ defined with \eqref{eq:definition-of-Toeplitz-operator-for-measures} is bounded, then
\[
	\int_{\mathbb{C}^{n}}^{}\Kph (z,w) f(w) e^{-\frac{|w|^2}{\alpha}}\, d\mu(w) = T_\mu f(z)= \langle T_\mu f,\Kph(\cdot,z) \rangle, \quad \forall f \in \hph;
\]
hence
\(
\widetilde{\mu}(z)=\langle T_\mu \kph(\cdot,z),\kph(\cdot,z) \rangle.
\)
Then  $|\widetilde{\mu}(z)|=|\langle T_\mu\left( \kph(\cdot,z) \right) ,\kph(\cdot,z) \rangle| \leq \left\lVert T_{\mu} \right\rVert,$ hence $\widetilde{\mu}$ is a bounded function.
\subsection{Trace class}
\begin{proposition}
	Let \(\mu \geq 0.\) Then $T_\mu \in S_1$ if and only if $\mu(\mathbb{C}^{n})<\infty.$
\end{proposition}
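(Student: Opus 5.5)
Both directions will come from the trace formula of Theorem \ref{tm:berezin-transform-trace-formula}, applied to the positive operator $T_\mu$, combined with Tonelli's theorem. Since $\mu\geq 0$, $T_\mu$ is positive whenever it is defined (at least on the dense span of kernels), because $\langle T_\mu f,f\rangle=\int |f|^2 e^{-|w|^2/\alpha}\,d\mu\geq 0$.

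For the direction $\mu(\mathbb{C}^n)<\infty \Rightarrow T_\mu\in S_1$, I first note that $\mu(B(a,r))\leq \mu(\mathbb{C}^n)$ for every $a$. By Theorem \ref{tm:characterization-of-Carleson-measures}, $\mu$ is then a $2$-Carleson measure. By Proposition \ref{prop:correspondence-between-FC-measures-and-boundedness}, $T_\mu$ is bounded and positive, and $\widetilde{T_\mu}=\widetilde{\mu}$. The trace formula then gives
\[
\tr(T_\mu)=\frac{1}{(\alpha\pi)^n}\int_{\mathbb{C}^n}\int_{\mathbb{C}^n}\left|\Kph(z,u)\right|^2 e^{-\frac{|u|^2}{\alpha}}e^{-\frac{|z|^2}{\alpha}}\,d\mu(u)\,dA(z),
\]
because the factor $2e^{|z|^2/\alpha}-1$ in the weight cancels exactly with the normalization in $\widetilde{\mu}$. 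By Tonelli, the inner $dA(z)$ integral is $e^{-|u|^2/\alpha}\|\Kph(\cdot,u)\|_{2,\alpha}^2=e^{-|u|^2/\alpha}\Kph(u,u)=2-e^{-|u|^2/\alpha}$, which lies in $[1,2]$. Hence
\[
\mu(\mathbb{C}^n)\leq \tr(T_\mu)=\int_{\mathbb{C}^n}\bigl(2-e^{-\frac{|u|^2}{\alpha}}\bigr)\,d\mu(u)\leq 2\mu(\mathbb{C}^n),
\]
and this is finite.

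For the converse, assume $T_\mu\in S_1$. Then $T_\mu$ is bounded, hence $\mu$ is $2$-Carleson and $\widetilde{T_\mu}=\widetilde\mu$ as above. The trace formula applies and the same Tonelli computation gives $\mu(\mathbb{C}^n)\leq \tr(T_\mu)<\infty$.

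The main obstacle is justifying the Tonelli computation of the inner integral, namely $\frac{1}{(\alpha\pi)^n}\int |\Kph(z,u)|^2e^{-|z|^2/\alpha}dA(z)=\Kph(u,u)=2e^{|u|^2/\alpha}-1$. This identity follows from the reproducing property together with $\Kph(z,u)=\overline{\Kph(u,z)}$. I would also cross-check it directly: expand $|K_\alpha(z,u)+K_\alpha(u,z)-1|^2$ and use the orthogonality relations $\langle K_\alpha(\cdot,u),\overline{K_\alpha(\cdot,u)}\rangle=1$ and $\langle K_\alpha(\cdot,u),1\rangle=1$. A smaller point is that Theorem \ref{tm:berezin-transform-trace-formula} is stated for bounded positive operators. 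This is why each direction first establishes boundedness via the Carleson characterization before invoking it.
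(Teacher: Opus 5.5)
Your proof is correct and follows essentially the same route as the paper: apply the trace formula of Theorem \ref{tm:berezin-transform-trace-formula} to the positive operator $T_\mu$, then use Tonelli with the identity $\frac{1}{(\alpha\pi)^n}\int|\Kph(z,u)|^2e^{-|z|^2/\alpha}\,dA(z)=2e^{|u|^2/\alpha}-1$, and compare with $\mu(\mathbb{C}^n)$ via $1\le 2-e^{-|u|^2/\alpha}\le 2$. Your extra step of first establishing that $T_\mu$ is bounded (via the Carleson characterization in one direction, and via $S_1\subset$ bounded operators in the other) is rigor that the paper leaves implicit.
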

\begin{proof}
	Since $ \mu \geq 0,$ we have that $T_\mu$ is a positive operator. Therefore, by Theorem \ref{tm:berezin-transform-trace-formula},  $T_\mu \in S_1$ if and only if
	\[
		\tr (T_\mu) = \frac{1}{\left( \alpha \pi \right) ^{n}} \int_{\mathbb{C}^{n}}^{}\widetilde{\mu}(z)\left[ 2e^{\frac{|z|^2}{\alpha}}-1 \right] e^{-\frac{|z|^2}{\alpha}}\, dA(z)< \infty.
	\]
	By Fubini's theorem
	\begin{align*}
		  & \frac{1}{\left( \alpha \pi\right) ^{n}}\int_{\mathbb{C}^{n}}^{}\widetilde{\mu}(z)\left[ 2e^{\frac{|z|^2}{\alpha}}-1 \right] e^{-\frac{|z|^2}{\alpha}}\, dA(z)                                                                                                        \\
		= & \frac{1}{\left( \alpha \pi \right) ^{n}}\int_{\mathbb{C}^{n}} \int_{\mathbb{C}^{n}}^{}\frac{|\Kph(z,u)|^2}{e^{\frac{|u|^2}{\alpha}}\left( 2e^{\frac{|z|^2}{\alpha}}-1 \right) }\, d\mu(u) \left[ 2e^{\frac{|z|^2}{\alpha}}-1 \right] e^{-\frac{|z|^2}{\alpha}} dA(z) \\
		= & \frac{1}{\left( \alpha \pi \right) ^{n}}\int_{\mathbb{C}^{n}}^{}\int_{\mathbb{C}^{n}}^{}|\Kph(z,u)|^2 e^{-\frac{|z|^2}{\alpha}}\, dA(z)e^{-\frac{|u|^2}{\alpha}} \, d\mu(u)                                                                                          \\
		= & \int_{\mathbb{C}^{n}}^{}\Kph(u,u) e^{-\frac{|u|^2}{\alpha}}\, d\mu(u)                                                                                                                                                                                                \\
		= & \int_{\mathbb{C}^{n}}^{}\left[ 2e^{\frac{|u|^2}{\alpha}}-1 \right] e^{-\frac{|u|^2}{\alpha}}\, d\mu(u).
	\end{align*}
	Since $1 \leq \frac{2e^{\frac{|u|^2}{\alpha}}-1}{e^{\frac{|u|^2}{\alpha}}}\leq 2,$ the desired conclusion follows.
\end{proof}
\subsection{Schatten class}
In this subsection, we provide some partial results about characterisation of the condition $T_\mu \in S_p$ for $p>1.$ We begin by considering absolutely continuous measures.
For every $p \in [1,\infty)$ and for every $\varphi \in L^p(\mathbb{C}^{n},dA)$, the Toeplitz operator $T_\varphi$ is bounded. Indeed, $\varphi = \varphi_1 -\varphi_2 +i \varphi_3 -i \varphi_4,$ where $\varphi_k \in L^p(\mathbb{C}^{n},dA)$ are positive functions. Then for every $a  \in \mathbb{C}^{n}$ and $r>0$ we have
\[
	\int_{B(a,r)}^{} \varphi_k(z) \, dA(z) \leq m(B(a,r))^{1/p'}  \left( \int_{B(a,r)}^{}|\varphi_k(z)|^{p}\, dA(z) \right) ^{1/p}  \leq C  \left( \int_{\mathbb{C}^{n}}^{}|\varphi(z)|^{p}\, dA(z) \right) ^{1/p} ,
\]
where $\frac{1}{p}+\frac{1}{p'}=1$. By Theorem \ref{tm:characterization-of-Carleson-measures} we have that $d\mu_k = \varphi_k dA$ is a $p$-Carleson measure: Therefore, $T_{\varphi_k}$ is a bounded operator by Proposition \ref{prop:correspondence-between-FC-measures-and-boundedness}, which implies that $T_\varphi$ is bounded.

Also, similarly we can obtain that if $\varphi \in L^1(dA)$ has compact support, then $T_\mu$ is compact.
\begin{proposition}\label{prop:toeplitz-function-operator-shatten-class}
	If $p \geq  1$ and $\varphi \in L^p(\mathbb{C}^{n},dA(z)),$ then $T_\varphi$ is in Schatten class $S_p.$
\end{proposition}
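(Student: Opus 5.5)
We prove the statement by complex interpolation between the endpoint cases $p=1$ and $p=\infty$. The map will be $\varphi \mapsto T_\varphi$, regarded as a linear map from $L^p(\mathbb{C}^{n},dA)$ into $S_p$. Interpolation needs two endpoint estimates.

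\emph{The endpoint $p=\infty$.} For $\varphi\in L^\infty(\mathbb{C}^{n},dA)$ we have already observed that $\|T_\varphi\|\le \|\varphi\|_\infty$. Here $S_\infty$ is the space of bounded operators. If one prefers the compact operators as the endpoint, one first works on the dense subclass of compactly supported bounded $\varphi$. For these, $T_\varphi$ is compact by the preceding remark, or directly from Theorem~\ref{thm:vanishing-Fock-Carleson-measures}, since $\varphi\,dA(B(z,r))\to 0$. Either endpoint works for the interpolation.

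\emph{The endpoint $p=1$.} Let $\varphi\in L^1(dA)$. We split $\varphi=\varphi_1-\varphi_2+i\varphi_3-i\varphi_4$ with $0\le\varphi_k\le|\varphi|$. For a positive $\varphi_k$ the measure $d\mu=\varphi_k\,dA$ is finite, with $\mu(\mathbb{C}^n)=\|\varphi_k\|_{L^1}$. The trace class proposition then gives $T_{\varphi_k}\in S_1$. Its proof in fact yields the explicit formula
\[
\|T_{\varphi_k}\|_{S_1}=\tr(T_{\varphi_k})=\frac{1}{(\alpha\pi)^n}\int_{\mathbb{C}^{n}}\left(2e^{\frac{|u|^2}{\alpha}}-1\right)e^{-\frac{|u|^2}{\alpha}}\varphi_k(u)\,dA(u)\le \frac{2}{(\alpha\pi)^n}\|\varphi_k\|_{L^1}.
\]
Summing over $k$ gives $\|T_\varphi\|_{S_1}\le \frac{8}{(\alpha\pi)^n}\|\varphi\|_{L^1}$. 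The step that needs care is the application of Theorem~\ref{tm:berezin-transform-trace-formula}. This requires $T_{\varphi_k}$ to be bounded and positive, which holds because $\varphi_k\,dA$ is a $2$-Carleson measure by Theorem~\ref{tm:characterization-of-Carleson-measures}. The Fubini computation in that proof is legitimate because all integrands are nonnegative.

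\emph{Interpolation.} The map $\varphi\mapsto T_\varphi$ is linear and bounded $L^1\to S_1$ and $L^\infty\to S_\infty$. We then invoke the complex interpolation identities $[L^1,L^\infty]_\theta=L^p$ and $[S_1,S_\infty]_\theta=S_p$ with $1/p=1-\theta$, as in \cite{zhuOperatorTheoryFunction2007}. These give $\|T_\varphi\|_{S_p}\le C\|\varphi\|_{L^p}$ for $1<p<\infty$, which is the claim. The main obstacle is making the interpolation rigorous, because $T_\varphi$ for $\varphi\in L^p$ is defined a priori only through the Carleson-measure construction. We handle this by proving the estimate first on the dense class of simple compactly supported functions, which lie in $L^1\cap L^\infty$ and on which both definitions agree. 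For general $\varphi\in L^p$ we then approximate by $\varphi_j\to\varphi$ in $L^p$. The operators $T_{\varphi_j}$ are Cauchy in $S_p$, so they converge in $S_p$ to some operator $T$. They also converge to $T_\varphi$ in operator norm. This follows from the boundedness estimate preceding the proposition, which bounds $\|T_\psi\|$ by $\sup_a\int_{B(a,r)}|\psi|\,dA\lesssim\|\psi\|_{L^p}$. The two limits coincide, so $T_\varphi=T\in S_p$.
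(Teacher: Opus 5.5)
Your proof is correct, but it takes a different route from the paper. The paper bounds the eigenvalues directly. It first reduces to $\varphi\ge 0$ with compact support, so that $T_\varphi$ is positive and compact and $T_\varphi=\sum_n\lambda_n\langle\cdot,e_n\rangle e_n$. Then $\lambda_n=\langle T_\varphi e_n,e_n\rangle$ is the average of $\varphi$ against the probability measure $|e_n|^2\,d\lambda_\alpha$, where $d\lambda_\alpha=(\alpha\pi)^{-n}e^{-|z|^2/\alpha}\,dA$. Jensen's inequality gives $\lambda_n^p\le\int\varphi^p|e_n|^2\,d\lambda_\alpha$. Summing with $\sum_n|e_n(z)|^2\le K_{ph,\alpha}(z,z)\le 2e^{|z|^2/\alpha}$ yields $\sum_n\lambda_n^p\le\frac{2}{(\alpha\pi)^n}\|\varphi\|_{L^p}^p$, and the general case follows by exhausting $\mathbb{C}^n$ with balls.

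You instead interpolate between two endpoints: the trace-class proposition at $p=1$, via the trace formula, and the trivial bound $\|T_\varphi\|\le\|\varphi\|_\infty$. The paper's argument is more elementary: it uses only convexity and the diagonal of the kernel, with no interpolation theorem. However, it needs positivity and compactness, hence the reductions and a limiting step that the paper handles with a one-line appeal to monotone convergence. Your argument treats complex symbols at once by linearity and reuses results already established. The price is the operator-valued Riesz--Thorin theorem $[S_1,S_\infty]_\theta=S_p$.

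Your closing density argument is a more careful justification of the passage to general $\varphi$ than the paper gives. You show the approximants are Cauchy in $S_p$ and converge in operator norm via $\|T_\psi\|\lesssim\sup_a\int_{B(a,r)}|\psi|\,dA\lesssim\|\psi\|_{L^p}$. One small simplification: $|\varphi|\,dA$ satisfies the Carleson condition for every $\varphi\in L^1+L^\infty$. So the map $\varphi\mapsto T_\varphi$ is already well defined and linear on all of $L^1+L^\infty$, and the interpolation theorem applies directly without the density detour.
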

\begin{proof}
	Decomposing $\varphi$ as a linear combination of four nonnegative functions in $L^p(\mathbb{C}^{n},dA),$ we can assume that $\varphi$ is nonnegative. Assume also that $\varphi$ has compact support in $\mathbb{C}^{n}.$ Then $T_\varphi$ is a positive compact operator. Therefore, it has a canonical decomposition of the form
	\[
		T_\varphi f = \sum_{n=1}^{\infty} \lambda_n \langle f,e_n \rangle e_n,
	\]
	where $\{e_n\}$ is an orthonormal set in $\hph.$
	Following the same idea from \cite[Prop. 7.11]{zhuOperatorTheoryFunction2007}, we obtain that
	\[
		\sum_{n=1}^{\infty} |\lambda_n|^{p} \leq  \int_{\mathbb{C}^{n}}^{}|\varphi(z)|^p \Kph (z,z) e^{-\frac{|z|^2}{\alpha}}\, \frac{dA(z)}{\left( \alpha \pi  \right) ^{n}} \leq  \frac{2}{\left( \alpha \pi  \right) ^{n}} \int_{\mathbb{C}^{n}}^{}|\varphi(z)|^{p}\, dA(z).
	\]
	Therefore, $T_\varphi$ is in $S_p.$

	The general case follows from monotone convergence and approximating $\varphi$ with $\varphi \chi_{B(r,0)}.$
\end{proof}
In the remainder of this paper, we present some results supporting the conjecture stated in the introduction.

Let $M=\mathcal{F}^2(\mathbb{C}^{n})$ and $N=\mathcal{F}^2 (\mathbb{C}^{n})\ominus \mathcal{A}$, where $\mathcal{A}$ is the subspace set of constant functions. Then, we have $\hph = M \oplus N$ and $M \bot N.$
We decompose $T_\mu$ as
\[
	\begin{split}
		T_\mu (f)(z) & =\int_{\mathbb{C}^{n}}^{}f(w)e^{\frac{z \bar{w}}{\alpha}}e^{-\frac{|w|^2}{\alpha}}\, d\mu(w) + \int_{\mathbb{C}^{n}}^{}f(w) \left( e^{\frac{\bar{z} w}{\alpha}} -1 \right) e^{-\frac{|w|^2}{\alpha}}\, d\mu(w) \\
		             & = T_{M,\mu}(f)(z)+T_{N,\mu}(f)(z).
	\end{split}
\]
One can easily notice that
\[
	\begin{split}
		\langle T_{M,\mu}f,g \rangle= \langle f,P_M g \rangle_{\mu}= \int_{\mathbb{C}^{n}}^{}f(w) \overline{P_M g(w)}e^{-\frac{|w|^2}{\alpha}}\, d\mu(w); \\
		\langle T_{N,\mu}f,g \rangle= \langle f,P_N g \rangle_\mu= \int_{\mathbb{C}^{n}}^{}f(w) \overline{P_N g(w)}e^{-\frac{|w|^2}{\alpha}}\, d\mu(w).
	\end{split}
\]
\begin{proposition}
	Let $T_\mu \in S_p$ then $\mu(B(\cdot,r)) \in L^p(dA)$ for every $r>0.$
\end{proposition}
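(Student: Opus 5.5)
The approach is to pass through the Berezin transform. Since $\mu\geq 0$, the operator $T_\mu$ is positive. Assuming $T_\mu\in S_p$ with $p\geq 1$, the Corollary after Proposition \ref{prop:schatten-class-berezin-transform} gives $\widetilde{\mu}=\widetilde{T_\mu}\in L^p(\mathbb{C}^{n},dA)$. It therefore suffices to establish a pointwise bound
\[
\mu(B(z,r))\leq C(r,\alpha)\,\widetilde{\mu}(z),\qquad z\in\mathbb{C}^{n},
\]
with a constant independent of $z$. Integrating the $p$-th power of this inequality then gives $\mu(B(\cdot,r))\in L^p(dA)$ for every $r>0$.

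For the pointwise bound, expand the kernel. Writing $x=\frac{u\bar z}{\alpha}$, we have
\[
\Kph(u,z)=e^{x}+e^{\bar x}-1=2e^{\mathrm{Re}\,x}\cos(\mathrm{Im}\,x)-1,
\]
which is real. Hence
\[
\widetilde{\mu}(z)=\int_{\mathbb{C}^{n}}\frac{|\Kph(z,u)|^2}{e^{\frac{|u|^2}{\alpha}}\left(2e^{\frac{|z|^2}{\alpha}}-1\right)}\,d\mu(u)\geq \int_{B(z,r)}\frac{\bigl(2e^{\mathrm{Re}\,x}\cos(\mathrm{Im}\,x)-1\bigr)^2}{2\,e^{\frac{|u|^2+|z|^2}{\alpha}}}\,d\mu(u).
\]
In the holomorphic case the analogous integrand equals $e^{-|u-z|^2/\alpha}$, which is at least $e^{-r^2/\alpha}$ on the ball. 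Here the difficulty is the oscillating factor $\cos(\mathrm{Im}(u\bar z)/\alpha)$: for $|z|$ large it can vanish at points $u\in B(z,r)$, because $\mathrm{Im}(u\bar z)$ varies by about $r|z|$ across the ball.

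I expect this oscillation to be the main obstacle, and I would handle it in one of two ways. The first option is to shrink the ball. Since $\mathrm{Im}(z\bar z)=0$, on $B(z,\delta/(1+|z|))$ we have $|\mathrm{Im}\,x|\leq \delta(1+|z|)/(\alpha(1+|z|))\le \delta/\alpha$, so $\cos\geq 1/2$ there. Also $\mathrm{Re}\,x\geq (|z|^2-|z|\delta)/\alpha$, so the integrand is bounded below by a constant times $e^{-|u-z|^2/\alpha}$. Balls of variable radius are awkward, however, so I prefer the second option: choose a fixed small $\delta$ and average $\widetilde{\mu}$ over $w\in B(z,r+1)$. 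Because $\cos(\mathrm{Im}(u\bar w)/\alpha)$ is a phase that is linear in $w$, for $u\in B(z,r)$ the set of $w\in B(z,r+1)$ with $\cos\ge 1/2$ and $|w-u|\le r+1$ should have Lebesgue measure at least $c>0$, uniformly in $z$ and $u$.

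Along that set the integrand for $\widetilde{\mu}(w)$ at $u$ is at least $c'e^{-|u-w|^2/\alpha}\ge c''$, using $e^{2\mathrm{Re}\,x}\gg 1$ so that the $-1$ is negligible; for bounded $|z|$ the argument is a simple compactness case. Fubini then yields
\[
\mu(B(z,r))\leq C\int_{B(z,r+1)}\widetilde{\mu}(w)\,dA(w).
\]
By Hölder, $\left(\int_{B(z,r+1)}\widetilde{\mu}\,dA\right)^p\le C\int_{B(z,r+1)}\widetilde{\mu}^p\,dA$, and integrating in $z$ gives $\|\mu(B(\cdot,r))\|_{L^p}^p\leq C\|\widetilde{\mu}\|_{L^p}^p<\infty$. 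The delicate step is the uniform lower bound on the measure of the good set of $w$. It rests on the fact that the phase $\mathrm{Im}(u\bar w)/\alpha$, as a function of $w$ on a unit ball, is either nearly constant (when $|u|$ is small, in which case we use a direct estimate) or oscillates with period of order $1/|u|$, so that a fixed proportion of the ball has $\cos\ge 1/2$.
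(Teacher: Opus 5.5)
Your argument is correct, but it follows a different route from the paper. The paper does not use the pluriharmonic Berezin transform here. It splits $\hph=M\oplus N$ with $M=\mathcal{F}^2_\alpha$ and notes that $\langle T_\mu f,g\rangle=\langle f,g\rangle_\mu=\langle T_{M,\mu}f,g\rangle$ for $f,g\in M$. Hence $\sum_n|\langle T_{M,\mu}e_n,e_n\rangle|^p\le\|T_\mu\|_{S_p}^p$ for every orthonormal basis $(e_n)$ of $M$; in effect, the holomorphic Toeplitz operator $T_{M,\mu}$ is the compression $P_MT_\mu|_M$ and lies in $S_p$. The paper then quotes Isralowitz--Zhu [Thm 4.4] for the holomorphic Fock space.

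That route is very short and never meets your obstacle, because on $M$ the normalized kernel satisfies $|k_z(w)|^2e^{-|w|^2/\alpha}=e^{-|z-w|^2/\alpha}$, with no oscillation. Its cost is that the real work is delegated to the holomorphic Schatten class theorem.

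Your route stays within the paper's own tools: the trace formula and interpolation give $\widetilde{\mu}\in L^p(dA)$. You then prove the local estimate $\mu(B(z,r))\le C\int_{B(z,r+1)}\widetilde{\mu}\,dA$ directly for the pluriharmonic kernel. The averaging is genuinely needed, since $\Kph(\cdot,z)$ has zeros in $B(z,r)$ for large $|z|$ and a pointwise bound fails for point masses placed there. The resulting estimate is reusable: it also gives the Carleson and vanishing Carleson conditions directly from boundedness or decay of $\widetilde{\mu}$. It is tied to $p\ge 1$ through the interpolation corollary and H\"older, which is the paper's setting anyway.

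The step you call delicate is cleanest if you restrict to $w\in B(u,1)\subset B(z,r+1)$. There $\mathrm{Im}(w\bar u)=\mathrm{Im}((w-u)\bar u)$ is a real-linear function of $w-u$ that vanishes at $w=u$.
\begin{itemize}
\item For large $|u|$, the set where $\cos\ge\frac12$ contains parallel slabs filling a fixed fraction of the ball. Also $\mathrm{Re}(w\bar u)\ge|u|^2-|u|$, which makes the $-1$ negligible, so the integrand is at least $\frac18e^{-|u-w|^2/\alpha}\ge\frac18e^{-1/\alpha}$.
\item For bounded $|u|$, the integrand is uniformly continuous and equals $2-e^{-|u|^2/\alpha}\ge1$ at $w=u$. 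This gives a uniform lower bound on a ball of fixed radius around $u$.
\end{itemize}
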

\begin{proof}
	If $(e_n)$ is a basis for $\mathcal{F}_\alpha^{2},$ then
	\[
		\left[ \sum_{n=1}^{\infty} |\langle T_\mu e_n,e_n  \rangle|^p \right] ^{1/p} \leq \left\lVert T_\mu \right\rVert_p
	\]
	By the identities mentioned earlier, we have
	\[
		\langle T_\mu e_n,e_n \rangle= \langle T_{M,\mu}e_n,e_n \rangle + \langle T_{N,\mu}e_n,e_n \rangle = \langle T_{M,\mu}e_n,e_n \rangle.
	\]
	Therefore, we conclude that $T_{M,\mu} : \mathcal{F}_\alpha^2  \to \mathcal{F}_{\alpha}^2$ belongs to the Schatten class $S_p.$ By \cite[Thm 4.4]{isralowitzToeplitzOperatorsFock2010}, it follows that $\mu(B(\cdot,r)) \in L^p(dA),$ for every $r>0.$
\end{proof}
\textbf{Sufficiency of the condition} Assume that $\mu(B(\cdot,r)) \in L^p(dA).$
Let us decompose $T_\mu = T_{M,\mu}P_M+ T_{M,\mu}P_N + T_{N,\mu}P_M + T_{N,\mu}P_N.$ Since $T_{M,\mu}: \mathcal{F}_\alpha^{2}\to \mathcal{F}_{\alpha}^{2}$ is in class $S_p,$ we have $T_{M,\mu}P_M \in S_p$ (as they have the same canonical decomposition). Futhermore,  it is straightforward to see that $T_{N,\mu}:N \to N$ is in $S_p.$ Moreover, since
\[
	\langle T_{M,\mu} P_N x,y \rangle = \langle P_N x,P_M y \rangle_{\mu} = \langle x,T_{N,\mu} P_M y \rangle,
\]
we conclude that $(T_{M,\mu} P_N)^* = T_{N,\mu} P_M,  $ and, hence it sufficies to prove that $T_M P_N$ is in $S_p.$

Assume additionally that $\mu$ is a positive regular radial Borel measure. Then, the measure $\mu$ satisfies $\int_{\mathbb{C}^{n}}^{}e^{-\frac{|w|^2}{\alpha}}\, d\mu(w)<\infty. $ Following the method from \cite[Thm 2.4]{maximenkoToeplitzOperatorsBergman2025c}, we obtain that there exists a Borel measure $\nu$ on $[0,\infty),$ such that
\[
	\int_{\mathbb{C}^{n}}^{}f(w)e^{-\frac{|w|^2}{\alpha}}\, d\mu = \int_{0}^{\infty}\int_{\mathbb{S}^{2n-1}}^{}f(r \zeta)\, d\sigma_{{2n-1}}(\zeta) \, d\nu(r).
\]
Note that if $f$ is a harmonic function, the previous formula implies that
\[
	\int_{\mathbb{C}^{n}}^{}f(w)e^{-\frac{|w|^2}{\alpha}}\, d\mu =f(0) \cdot \int_{\mathbb{C}^{n}}^{}e^{-\frac{|w|^2}{\alpha}}\, d\mu(w).
\]
In this case, we have
\[
	T_{M,\mu} P_N f(z)=\int_{\mathbb{C}^{n}}^{}e^{\frac{z \bar{w}}{\alpha}}\overline{P_N f(w)}e^{-\frac{|w|^2}{\alpha}}\, d\mu(w)=0.
\]
Therefore, $T_{M,\mu} P_N$ is the zero operator; hence, $T_{M,\mu} P_N \in S_p$ for every $p.$

\textbf{Conflict of interest.} The authors declares that they have not conflict of interests.

\textbf{Acknowledgements.} Dj. Vujadinović gratefully acknowledges financial
support from the Ministry of Education, Science and Innovation of Montenegro through the
grants “Mathematical Analysis, Optimisation and Machine Learning” and “Complex-analytic
and geometric techniques for non-Euclidean machine learning: theory and applications.”

\nocite{*}
\printbibliography
\end{document}